\documentclass{amsart}
\usepackage[utf8]{inputenc}
\pdfoutput=1

\usepackage[margin=1.25in]{geometry}

\usepackage{amsmath,color,graphicx,amssymb}
\usepackage{amsthm}
\usepackage{amsmath}
\usepackage{amssymb, amsfonts, amsxtra, mathrsfs, mathtools} 
\usepackage{tikz}
\usetikzlibrary{calc,matrix,arrows,shapes.geometric,positioning,decorations.pathmorphing}
\usepackage{tikz-cd}
\usepackage{tikzsymbols}

\usepackage{hyperref}

\usepackage{float}
\usepackage{indentfirst}
\usepackage{fancyhdr}

\usepackage{yhmath}

\newtheorem{thm}{Theorem}[section] 

\newtheorem{lemma}[thm]{Lemma}     

\newtheorem{cor}[thm]{Corollary}

\newtheorem{prop}[thm]{Proposition}

\theoremstyle{definition}
\newtheorem{defn}[thm]{Definition}

\theoremstyle{definition}
\newtheorem{ex}[thm]{Example}

\theoremstyle{definition}
\newtheorem{remk}[thm]{Remark}

\theoremstyle{definition}
\newtheorem{constr}[thm]{Construction}

\theoremstyle{definition}

\theoremstyle{definition}
\newtheorem{notat}[thm]{Notation}

\theoremstyle{definition}
\newtheorem{terminology}[thm]{Terminology}

\numberwithin{equation}{thm}

\allowdisplaybreaks

\title{\large{\uppercase{Powers of Edge Ideals with Linear Quotients}}}

\author[E.~Basser]{Etan Basser}
\address{Department of Mathematics, Yale University, New Haven, CT}
\email{etan.basser@yale.edu}

\author[R.~Diethorn]{Rachel Diethorn}
\address{Department of Mathematics,
 Oberlin College, Oberlin, OH}
\email{rdiethor@oberlin.edu}

\author[R.~Miranda]{Robert Miranda}
\address{Department of Mathematics, UCLA, Los Angeles, CA}
\email{robertmiranda@math.ucla.edu}

\author[M.~Stinson-Maas]{Mario Stinson-Maas}
\address{Department of Mathematics,
 Oberlin College, Oberlin, OH}
\email{mstinson@oberlin.edu}

\date{}

\begin{document}

\subjclass[2024]{13D02, 13F55, 13P20, 05E40}
\keywords{edge ideals, linear quotients, Betti numbers, linear resolutions, anticycle, powers of ideals}
\thanks{The second and fourth authors were partially supported by National Science Foundation Award No. 2418637. 
} 

\begin{abstract}
We prove that second and higher powers of the edge ideals of anticycles admit linear quotient orderings, although the edge ideals themselves do not, thus resolving an open question of Hoefel and Whieldon in the affirmative and providing the first class of gap-free graphs whose edge ideals satisfy this property on their powers.  We also construct an explicit and straightforward linear quotient ordering for any power of a quadratic monomial ideal which admits linear quotients.  This expands on a well-known result of Herzog, Hibi, and Zheng.  As a consequence, we give explicit formulas for the projective dimension and Betti numbers of the edge ideals of whisker graphs.
\end{abstract}

\maketitle

\section{Introduction}

The problem of describing resolutions of powers of \textit{edge ideals} (that is, square-free quadratic monomial ideals) has been a topic of intense interest in recent years; see for example \cite{banerjee15}, \cite{BBKH}, \cite{bht15}, \cite{bigdeli}, \cite{herzogetalindex}, \cite{CEFMMSS}, \cite{n18}, \cite{erey}, \cite{FMO12}, \cite{HHZ04}, \cite{HW11}, \cite{mv}, \cite{mfy17}, \cite{nevo}, \cite{Nevo2009LINEARRO}, \cite{NP13}, and \cite{s18}.  A central theme in this vast body of work is understanding when
such resolutions are linear; that is, when the entries in the matrices representing the differentials are linear forms.
Motivated by work of Herzog, Hibi, and Zheng in \cite{HHZ04}, we approach this problem via the linear quotient property, which is defined as follows.

\begin{defn}
An ideal $I$ of a standard graded polynomial ring $P=\mathsf{k}[x_1,\dots,x_n]$ has \textit{linear quotients} if for some ordering of a minimal set of generators $m_1,\dots,m_r$ of $I$, each ideal quotient
\begin{displaymath}
((m_1,...,m_{i-1}):(m_{i}))
\end{displaymath}
for $i=2,\dots , r$, is generated by some subset of the variables $x_1,\dots,x_n$.
\end{defn}

The linear quotient property, first introduced by Herzog and Takayama in \cite{ht02}, is known to impose strong homological restrictions on the ideal $I$.  In fact, for monomial ideals that admit linear quotients, one can even build a linear resolution explicitly using the well-known iterated mapping cone construction; see \cite{ht02}.  Even more can be said in the case of quadratic monomial ideals (e.g. edge ideals) as we see in the following result of Herzog, Hibi, and Zheng.  

\begin{thm} (\cite[Theorem 3.2]{HHZ04})\label{HHZ}
The following conditions are equivalent for a quadratic monomial ideal $I$ of the standard graded polynomial ring $P=\mathsf{k}[x_1,\dots,x_n]$:
\begin{itemize}
\item[(a)]  $I$ has a linear resolution;
\item[(b)]  $I$ has linear quotients;
\item[(c)]  $I^n$ has a linear resolution for all $n\geq 1$.
\end{itemize}
\end{thm}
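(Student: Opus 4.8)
The plan is to prove the cycle of implications, handling (b) $\Rightarrow$ (a) and (c) $\Rightarrow$ (a) quickly and concentrating the work on (a) $\Rightarrow$ (b) and (a) $\Rightarrow$ (c). The implication (b) $\Rightarrow$ (a) is not special to quadratic ideals: for any monomial ideal generated in a single degree, the iterated mapping cone built on the chain of colon ideals $((m_1,\dots,m_{i-1}):(m_i))$ yields a free resolution, and when each such colon ideal is generated by variables every stage of the mapping cone contributes only linear syzygies, so the resolution is linear (see \cite[Construction 27.3]{peeva}); since $I$ is quadratic it is then $2$-linear. The implication (c) $\Rightarrow$ (a) is immediate on taking $n=1$. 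So the content is in deducing (b) and (c) from (a).

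I would first reduce to the case that $I$ is squarefree, hence the edge ideal $I(G)$ of a finite simple graph $G$. For the equivalence (a) $\Leftrightarrow$ (b) this is routine, since polarization $I\mapsto I^{\mathrm{pol}}$ preserves all graded Betti numbers and is compatible with the linear quotient property (the pure-power generators $x_i^2$ requiring only minor extra care); for (a) $\Rightarrow$ (c) one must be slightly more careful, because polarization does not commute with taking powers, but the essential case is still $I=I(G)$, on which I will concentrate.

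The key external input is Fröberg's theorem: the edge ideal $I(G)$ has a $2$-linear resolution (over any field) exactly when the complementary graph $\overline{G}$ is chordal. So (a) translates into the combinatorial statement that $\overline{G}$ is chordal, and the task becomes (i) if $\overline{G}$ is chordal then $I(G)$ has linear quotients, and (ii) if $\overline{G}$ is chordal then $I(G)^n$ has a linear resolution for all $n\geq 1$. For (i) I would fix a perfect elimination ordering $v_1,\dots,v_n$ of $\overline{G}$ --- so for each $i$ the later $\overline{G}$-neighbors of $v_i$ form a clique in $\overline{G}$, equivalently the later non-neighbors of $v_i$ in $G$ form an independent set of $G$ --- order the minimal generators $x_ix_j$ (with $i<j$) of $I(G)$ compatibly with this vertex order, and check directly that each colon ideal is generated by variables: an earlier generator sharing a vertex with the current one contributes a single variable, and the perfect elimination property is precisely what excludes the one configuration in which a quadratic (rather than linear) generator of the colon ideal could survive.

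The real obstacle is (ii), promoting the linear resolution from $I(G)$ to all its powers. Here I would induct on $n$, building an ordering of the minimal monomial generators of $I(G)^n$ --- each a product $e_1\cdots e_n$ of edges of $G$ --- out of orderings of the generators of $I(G)^{n-1}$ and of $I(G)$ (the latter from (i)), and verifying that this ``product'' ordering again has linear quotients, whence $I(G)^n$ has a linear resolution by (b) $\Rightarrow$ (a). The delicate point, and where the chordality of $\overline{G}$ enters essentially, is the colon-ideal condition for this ordering: given two generators $m,m'$ of $I(G)^n$, one must produce a variable $x$ dividing $\operatorname{lcm}(m,m')/m'$ such that $xm'$ lies in the ideal generated by $m$ together with the generators preceding $m'$, and organizing this so the bookkeeping closes up under the induction is where essentially all the effort lies. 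Making such an ordering completely explicit for \emph{every} edge ideal with linear quotients --- not only those with chordal complement --- is exactly what the present paper accomplishes, thereby reproving this implication. With (i), (ii), and the routine implications in hand, the proof is complete.
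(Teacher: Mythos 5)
This theorem is cited from Herzog--Hibi--Zheng (\cite[Theorem 3.2]{HHZ04}); the paper does not prove it, so there is no internal proof to compare your attempt against. What the paper does supply, in Theorem \ref{mainthm} and Corollary \ref{recover}, is a new explicit linear quotient ordering showing that $I(G)^k$ has linear quotients whenever $I(G)$ does --- i.e.\ the implication (b)$\Rightarrow$(c) in the strengthened ``linear quotients'' form --- and you correctly identify this as the crux and note that the paper's construction would furnish it. Your treatment of (b)$\Rightarrow$(a) via iterated mapping cones, of (c)$\Rightarrow$(a) by taking $n=1$, and your outline of (a)$\Rightarrow$(b) via Fr\"oberg's theorem together with a perfect elimination ordering of $\overline{G}$, are all the standard and correct routes.

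The one genuine gap is in your reduction to the squarefree case for (a)$\Rightarrow$(c). You acknowledge that polarization does not commute with taking powers, but then simply declare ``the essential case is $I=I(G)$'' and move on. A quadratic monomial ideal with a linear resolution can contain squares $x_i^2$, and for such $I$ the powers $I^n$ are not obtained by specializing powers of $I^{\mathrm{pol}}$, so the chordality of the polarized complement says nothing a priori about the resolutions of $I^n$. You would either need to run the perfect-elimination / product-ordering induction directly for graphs with loops (extending Fr\"oberg-type reasoning to that setting), or appeal to a mechanism that bypasses polarization --- for instance the Rees-algebra / $x$-condition argument that Herzog--Hibi--Zheng actually use, or the Herzog--Hibi result on powers of ideals with linear quotients generated in one degree. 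As written, your sketch establishes (a)$\Rightarrow$(c) only for edge ideals, not for all quadratic monomial ideals as the theorem asserts.
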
 

Given this result, a natural question is whether the powers of an ideal having linear quotients have linear quotients as well.  The answer to this question is known to be negative for non-quadratic monomial ideals.  For example, in \cite[Example 4.3]{CH03}, Conca and Herzog provide an example of an ideal $I$ generated by degree 3 monomials that has linear quotients, but such that $I^2$ does not.  For quadratic monomial ideals, however, the question was answered in the affirmative in \cite[Theorems 10.1.9 and 10.2.5]{HHbook} building from work in \cite{HHZ04}; see also \cite[Theorem 2.6]{dali15}.  The theorems above show the existence of such linear quotient orderings using the so-called x-condition on the Gr{\"o}bner basis of the defining ideal of the Rees algebra of $I$. 


In Theorem \ref{mainthm}, we construct an  explicit linear quotient ordering for any power of  a quadratic monomial ideal $I$ which admits linear quotients, depending only on the linear quotient ordering of the ideal $I$ itself. This gives a constructive proof, motivated by the structure of the corresponding graph, of the above mentioned result from \cite{HHbook}.
A different ordering is given in \cite{ficarra24}; see also \cite{banerjee15}.
We illustrate the utility of our linear quotient ordering by providing explicit formulas for the projective dimension and Betti numbers of the powers of the edge ideal of what we call a \textit{whisker graph}, generalizing work of  Ferr\'o, Murgia, and Olteanu in \cite{FMO12} and complementing results such as \cite[Corollary 3.6]{diethorn} and \cite[Corollary 2.6]{rvt}. 
 Furthermore, our linear quotient ordering inspires Example \ref{counterex}, which gives a counterexample to \cite[Conjecture 4.1]{EFHHKM}.

Conversely, one may ask if $I^k$ has linear quotients for some $k > 1$, must $I$ have linear quotients? The answer to this question is negative, as Hoefel and Whieldon proved in \cite[Theorem 4.2]{HW11} that for an anticycle graph on at least five vertices, the square of the edge ideal has linear quotients even though the edge ideal itself does not.  However, their ordering did not extend 
to higher powers, leaving the open question of whether higher order powers of the edge ideal of the anticycle graph admit linear quotients; see \cite[Question 5.1]{HW11}. 



We answer this question in the affirmative in Theorem \ref{anticycle-theorem}; that is, we prove that the second and higher order powers of the edge ideal of the anticycle graph on at least 5 vertices admit linear quotients by constructing an explicit and surprisingly simple linear quotient ordering.  This provides the first class of so-called gap-free graphs whose edge ideals do not have linear quotients, but whose second and higher powers have linear quotients.  Such classes of graphs are of significant interest, given their connection to Nevo and Peeva's open conjecture in \cite{NP13} that sufficiently high powers of the edge ideal of a gap-free graph have linear resolutions, and the related open conjecture in \cite{EFHHKM} that if the $N$th power of an edge ideal $I$ has linear quotients, then so does $I^k$ for all $k\geq N$.
For recent work related to these conjectures see, for example, \cite{banerjee15}, \cite{BBKH}, \cite{bigdeli}, \cite{n18}, \cite{erey}, \cite{mv}, \cite{nevo}, and \cite{Nevo2009LINEARRO}.

We now outline the contents of this paper.  In Section 2 we collect some preliminary definitions and facts  regarding the linear quotient property of monomial ideals that we will use throughout the paper.  In Section 3 we prove in Theorem \ref{mainthm} that the powers of a quadratic monomial ideal with linear quotients also admit linear quotients; our linear quotient ordering appears in Construction \ref{ordering}.  We also provide a counterexample to \cite[Conjecture 4.1]{EFHHKM}; see Example \ref{counterex} and Remark \ref{counterrem}.  In Section 4 we use this linear quotient ordering to provide explicit formulas for the projective dimension and Betti numbers of powers of edge ideals of whisker graphs.  In Section 5 we prove in Theorem \ref{anticycle-theorem} that the higher order powers of the edge ideal of the anticycle graph have linear quotients; our ordering appears in Construction \ref{anticycle-order}.  Finally, in Section 6, we examine the problem of finding linear quotient orderings from a computational perspective using new and original methods on Macaulay2 \cite{M2} and highlight some relevant examples that support our work. 
 Our code used to execute these computations as well as relevant documentation can be found in \cite{MSM24}.

\section{Preliminaries}

In this section we collect some preliminary definitions and facts we use throughout the paper, including the definition of an edge ideal, a useful fact for working with linear quotient orderings, and formulas for the projective dimension and Betti numbers of ideals that admit linear quotient orderings.  Establishing some notation to be used throughout the section, let $P=\mathsf{k}[x_1,\dots,x_n]$ be a standard graded polynomial ring over a field $\mathsf{k}$.  We begin by recalling the notion of an edge ideal. 

\begin{defn}
Let $G=(V,E)$ be a simple graph (that is, with no loops nor multiple edges) on vertices $V=\{x_1,\dots,x_n\}$.  The \textit{edge ideal} associated to $G$ is the $P$-ideal
\begin{align*}
I(G)=(x_ix_j\,|\,\{x_i,x_j\}\in E).
\end{align*}
\end{defn} 

\begin{remk}\label{quadmon}
    It is standard in the literature on edge ideals to assume that the underlying graph has no loops; that is, no edges of the form $\{x_i,x_i\}$.  Thus edge ideals are precisely square-free quadratic monomial ideals.  However, each quadratic monomial ideal naturally corresponds to a graph $G$ that may contain loops (but no multiple edges).  Abusing notation slightly, we write such ideals as $I(G)$.  
\end{remk}

Next we state a standard result on the linear quotient property which we will use throughout this paper; see for example \cite[Lemma 8.2.3]{HHbook}. 

\begin{lemma}\label{HWfact}
    Let $I=(m_1,\dots,m_r)$ be a monomial ideal of $P$.  The ordering $m_1,\dots,m_r$ yields linear quotients for $I$ if and only if for all $i,j\in\{1,\dots,r\}$ with $j<i$, there exists some $h<i$ (possibly equal to $j$), such that for some $x\in\{x_1,\dots,x_n\}$, the following hold:
    \begin{align*}
        \frac{m_h}{\mathrm{gcd}\,(m_h, m_i)}=x\quad\text{and}\quad x\,\Big|\,\frac{m_j}{\mathrm{gcd}\,(m_j, m_i)}.
    \end{align*} 
    
\end{lemma}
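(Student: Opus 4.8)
The plan is to reduce the statement to the standard description of the colon ideals occurring in the definition of linear quotients. For $i\in\{2,\dots,r\}$ set $J_i:=((m_1,\dots,m_{i-1}):m_i)$. Since the $m_h$ are monomials, one has, for any monomial $u$, that $m_h\mid u\,m_i$ if and only if $\tfrac{m_h}{\gcd(m_h,m_i)}\mid u$ (compare exponents variable by variable); because $(m_1,\dots,m_{i-1})$ is a monomial ideal, it follows that
\[
J_i=\left(\tfrac{m_h}{\gcd(m_h,m_i)}\;:\;h=1,\dots,i-1\right).
\]
By definition the ordering $m_1,\dots,m_r$ yields linear quotients for $I$ exactly when each $J_i$, for $i=2,\dots,r$, is generated by a subset of $\{x_1,\dots,x_n\}$, so it suffices to characterize when this happens for a fixed $i$.

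To do that I would first record the elementary fact that a monomial ideal $J$ is generated by a subset of the variables if and only if every element of a monomial generating set of $J$ is divisible by some variable that lies in $J$: for the forward direction use that a monomial belongs to a monomial ideal iff it is divisible by one of the generators; for the reverse direction, note that $J$ and the ideal generated by all variables contained in $J$ are then mutually contained. Applying this to $J_i$ with the generating set $\bigl\{\tfrac{m_h}{\gcd(m_h,m_i)}\bigr\}$, the condition ``$J_i$ is generated by variables'' becomes: for every $j<i$ there is a variable $x$ with $x\mid\tfrac{m_j}{\gcd(m_j,m_i)}$ and $x\in J_i$. Next I would unwind $x\in J_i$: this means $x\,m_i\in(m_1,\dots,m_{i-1})$, hence (monomial ideal again) $m_h\mid x\,m_i$ for some $h<i$, i.e.\ $\tfrac{m_h}{\gcd(m_h,m_i)}$ divides $x$; since $m_1,\dots,m_r$ is a \emph{minimal} generating set, no $m_h$ divides $m_i$, so $\tfrac{m_h}{\gcd(m_h,m_i)}\neq 1$ and the only possibility is $\tfrac{m_h}{\gcd(m_h,m_i)}=x$. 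Substituting this back, ``$J_i$ is generated by variables'' is equivalent to: for every $j<i$ there exist a variable $x$ and an index $h<i$ with $\tfrac{m_h}{\gcd(m_h,m_i)}=x$ and $x\mid\tfrac{m_j}{\gcd(m_j,m_i)}$. Letting $i$ range over $\{2,\dots,r\}$ (the case $i=1$ being vacuous) yields precisely the asserted condition.

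The proof is a chain of equivalences with no genuine computation, so the only real care needed is bookkeeping: one must keep the quantifier on $j$ correctly nested inside the ``for each $i$'' coming from the definition of linear quotients, and one must invoke minimality of the generating set at exactly the point where $\tfrac{m_h}{\gcd(m_h,m_i)}$ could a priori equal $1$ (without that hypothesis the stated equivalence genuinely fails). I expect this last point to be the only, and minor, obstacle.
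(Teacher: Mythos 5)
The paper states Lemma~\ref{HWfact} without proof, attributing it to Hoefel and Whieldon~\cite{HW11}, so there is no in-paper argument to compare against; your proposal in effect supplies the proof that the paper outsources. Your chain of equivalences is correct: the identification $J_i=\bigl(m_h/\gcd(m_h,m_i):h<i\bigr)$ for the colon ideal, the characterization of when a monomial ideal is generated by a subset of the variables, and the unravelling of $x\in J_i$ to produce the witness index $h$ all check out, and the quantifier bookkeeping matches the statement. One small correction to your closing remark: the equivalence does not actually ``genuinely fail'' without minimality. If $m_{h_0}\mid m_i$ for some $h_0<i$ (equivalently $m_{h_0}/\gcd(m_{h_0},m_i)=1$), then $1\in J_i$ forces $J_i=P$, which is not generated by a subset of the variables, so the left-hand side fails; and on the right-hand side, taking $j=h_0$ demands a variable dividing $1$, which is impossible, so that side fails too. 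Both sides fail together and the biconditional survives; what fails without minimality is only the intermediate step where you pass from ``$m_h/\gcd(m_h,m_i)\mid x$'' to ``$m_h/\gcd(m_h,m_i)=x$''. Since minimality is in force (linear quotients are by definition taken with respect to a minimal generating set), this does not affect the validity of your proof, only the sharpness of the parenthetical remark.
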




For an ideal that has linear quotients, one can obtain explicit formulas for its projective dimension and Betti numbers directly from the iterated mapping cone construction; see \cite{ht02}.  The formulas are given in the following result which appears in \cite[Corollary 2.7]{SV09}, \cite{ht02}, and \cite[Corollary 8.2.2]{HHbook}. 

\begin{prop}\label{pd&betti}
Let $I$ be a monomial ideal of $P$ with linear quotient ordering $I=(m_1,\dots,m_r)$.  Define $\nu_1=0$ and let $\nu_j$ be the minimal number of generators of the ideal $(m_1,\dots,m_{j-1}):(m_j)$, for $j=2,\dots, r$.  Then the projective dimension and Betti numbers of $I$ are given by
\begin{align*}
    \mathrm{pd}(I)=\mathrm{max}\{\nu_j\,|\,1\leq j\leq r\}
\quad\text{and}\quad
    \beta_i(I)=\sum_{j=1}^r {\nu_j\choose i}, \quad\text{for all}\quad 0\leq i\leq\mathrm{pd}(I).
\end{align*}
\end{prop}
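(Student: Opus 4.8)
The plan is to run the iterated mapping cone construction of Construction~\ref{coneconstr} on the given ordering $m_1,\dots,m_r$ and show that it outputs a \emph{minimal} graded free resolution of $P/I$; the two formulas then drop out by counting ranks. Write $I_j=(m_1,\dots,m_j)$, so $I_1=(m_1)$ and $I_r=I$. Saying that $m_1,\dots,m_r$ is a linear quotient ordering means exactly that for each $j\ge 2$ the colon ideal $(I_{j-1}):(m_j)$ is generated by $\nu_j$ of the variables $x_1,\dots,x_n$. Since a set of distinct variables is a regular sequence, $P/\big((I_{j-1}):(m_j)\big)$ is resolved by the Koszul complex on those $\nu_j$ variables; that complex is minimal, has length $\nu_j$, and its $i$-th free module has rank $\binom{\nu_j}{i}$. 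In particular $\beta_i\big(P/((I_{j-1}):(m_j))\big)=\binom{\nu_j}{i}$ for all $i$.

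I would then prove by induction on $j$ that the resolution $G^j$ of $P/I_j$ produced by the construction is minimal. The base case $j=1$ is $0\to P\xrightarrow{\,m_1\,}P\to P/I_1\to 0$, which is minimal because $m_1$ lies in the maximal graded ideal $\mathfrak m=(x_1,\dots,x_n)$. For the step, take $F^{j-1}$ to be the Koszul complex from the previous paragraph (a minimal resolution of $P/((I_{j-1}):(m_j))$), take $G^{j-1}$ to be the minimal resolution of $P/I_{j-1}$ given by the inductive hypothesis, and form $G^j=\mathrm{cone}(\phi_{j-1})$ with $\phi_{j-1}\colon F^{j-1}\to G^{j-1}$ a lift of multiplication by $m_j$. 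The differential of $\mathrm{cone}(\phi_{j-1})$, in the natural basis, has entries among those of $d^{F^{j-1}}$, those of $d^{G^{j-1}}$, and those of $\phi_{j-1}$; the first two families lie in $\mathfrak m$ by minimality, so $G^j$ is minimal as soon as $\phi_{j-1}$ can be chosen with all entries in $\mathfrak m$. This is the one place the linear quotients hypothesis is essential: such a $\phi_{j-1}$ can be written down explicitly, sending the Koszul basis element indexed by a subset $\sigma$ of the $\nu_j$ relevant variables to a signed $\mathfrak m$-linear combination of basis elements of $G^{j-1}$, following Herzog--Takayama \cite{ht02}; checking that this explicit map really is a chain map lifting $\cdot\,m_j$ is the genuinely technical point.

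Granting minimality, I would read the Betti numbers directly off the construction. Because $\mathrm{cone}(\phi_{j-1})_n=F^{j-1}_{n-1}\oplus G^{j-1}_n$ and the cone is minimal, the short exact sequence of Construction~\ref{coneconstr}, together with the standard identity $\beta_i(J)=\beta_{i+1}(P/J)$ for an ideal $J$, gives for $j\ge 2$
\begin{align*}
\beta_i(I_j)=\beta_{i+1}(P/I_j)=\beta_{i+1}(P/I_{j-1})+\beta_i\big(P/((I_{j-1}):(m_j))\big)=\beta_i(I_{j-1})+\binom{\nu_j}{i}.
\end{align*}
Since $I_1=(m_1)\cong P$ is free, $\beta_i(I_1)=\binom{0}{i}=\binom{\nu_1}{i}$, so telescoping over $j=1,\dots,r$ yields $\beta_i(I)=\sum_{j=1}^r\binom{\nu_j}{i}$. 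Finally each summand $\binom{\nu_j}{i}$ is a nonnegative integer, nonzero precisely when $0\le i\le\nu_j$, so $\beta_i(I)\ne 0$ if and only if $i\le\max\{\nu_j\mid 1\le j\le r\}$, which gives $\mathrm{pd}(I)=\max\{\nu_j\mid 1\le j\le r\}$.

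The main obstacle is the minimality of the iterated mapping cone — concretely, producing the comparison maps $\phi_{j-1}$ with entries in $\mathfrak m$, which is false for a general monomial ideal (the Taylor-type pieces can be too big) and holds here only because each colon ideal is generated by variables. Everything after that is routine bookkeeping with binomial coefficients.
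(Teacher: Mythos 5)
The paper does not prove this proposition itself: it is stated with the citation \cite[Corollary 2.7]{SV09}, so there is no internal proof to compare against. Your reconstruction is the standard iterated-mapping-cone argument, and it is essentially correct: resolve each quotient $P/((I_{j-1}):(m_j))$ by the Koszul complex on the $\nu_j$ variables that generate it, form the cone $G^j=\operatorname{cone}(\phi_{j-1})$, check minimality, read off ranks, and conclude. The induction and bookkeeping (including the base case $\beta_i(I_1)=\binom{0}{i}$ and the telescoping) are correct, and the derivation of $\operatorname{pd}(I)=\max_j\nu_j$ from $\beta_i(I)=\sum_j\binom{\nu_j}{i}$ is fine.

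The one place you are brief is exactly the place you flag: minimality of the cone, i.e.\ that $\phi_{j-1}$ can be chosen with all entries in $\mathfrak m$. Your appeal to Herzog--Takayama \cite{ht02} is in the right spirit, but be careful: their explicit comparison maps are constructed under the additional hypothesis of a \emph{regular decomposition function}, which an arbitrary linear quotient ordering need not admit. The general statement (for any linear quotient ordering, not necessarily degree-ordered or admitting a regular decomposition function) requires either (i) first reordering to a degree-nondecreasing linear quotient ordering, where one can argue directly that unit entries in $\phi_{j-1}$ are excluded by degree considerations, and then invoking that Betti numbers are independent of the chosen ordering, or (ii) using the argument as in \cite{SV09}. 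So your proof outline is correct and takes the same route the source would, but a fully rigorous write-up should not cite \cite{ht02} alone for the minimality step without either the degree-ordering reduction or the decomposition-function hypothesis.
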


We use Proposition \ref{pd&betti} to obtain explicit formulas for the projective dimension and the Betti numbers of the powers of the edge ideals of whisker graphs in Section 5.

\section{Powers of quadratic monomial ideals with linear quotients}

In this section we prove that the powers of a quadratic monomial ideal with linear quotients must also admit linear quotients by constructing an explicit linear quotient ordering.
Throughout this section, let $P=\mathsf{k}[x_1,\dots,x_n]$ be a standard graded polynomial ring over a field $\mathsf{k}$ and let $G$ be a graph with vertices $x_1,\dots,x_n$ and with no multiple edges.  

For the quadratic monomial ideal $I(G)=(m_1,\dots, m_r)\subseteq P$ corresponding to $G$ (see Remark \ref{quadmon}),
we are interested in relating the generators of $I(G)^k$ to the generators of $I(G)$ in order to produce a linear quotient ordering of $I(G)^k$. 
Notice that the set of formal combinations 
\[
    \mathcal{S}(G)^k=\{ m_1^{\alpha_1} \dotsc\,  m_r^{\alpha_r} \ | \ \alpha_1 + \dots + \alpha_r = k\}
\]
is a generating set for $I(G)^k$, but in many cases contains repetitions and thus is not a minimal generating set. 
This occurs in the case of cyclic graphs, as we illustrate in the next example.

\begin{ex}\label{cycle example}
Consider the cycle $\mathcal{C}_4$ as pictured below.

\begin{center}
\begin{tikzpicture}
    \node (x1) at (0,0) {$x_1$};
    \node (x2) at (2,0) {$x_2$};
    \node (x3) at (2,-2) {$x_3$};
    \node (x4) at (0,-2) {$x_4$};
    
    \path [thick]
    (x1) edge node [above] {$m_1$} (x2)
    (x2) edge node [right] {$m_2$} (x3)
    (x3) edge node [below] {$m_3$} (x4)
    (x4) edge node [left] {$m_4$} (x1);
\end{tikzpicture}
\end{center}

\noindent The edge ideal $I(\mathcal{C}_4)$ has generators $m_1, m_2, m_3, m_4$, but the products $m_1 m_3=x_1x_2x_3x_4$ and $m_2 m_4=x_1x_2x_3x_4$ produce the same generator of $I_{\mathcal{C}_4}^2$.
\end{ex}

We will need to handle the possibility of repetitions in $\mathcal{S}(G)^k$ carefully in our linear quotient ordering in Construction \ref{ordering}.  Before this construction, we establish some terminology and notation we will use throughout this section and the rest of the paper.

\begin{terminology}\label{decomp}
    Each element $M\in\mathcal{S}(G)^k$ can also be expressed in terms of the variables:
\begin{align*}
    M=m_1^{\alpha_1} \dotsc\,  m_r^{\alpha_r}=x_1^{a_1} \dotsc\,  x_n^{a_n}
\end{align*}
for some list of nonnegative integers $a=(a_1,\dots,a_n)$.  Throughout this paper we often refer to $x_1^{a_1} \dotsc\,  x_n^{a_n}$ as the \textit{vertex decomposition} of $M$ and to $m_1^{\alpha_1} \dotsc\,  m_r^{\alpha_r}$ as an \textit{edge decomposition} of $M$.  As we see in Example \ref{cycle example}, edge decompositions of $M$ need not be unique.  For a fixed edge decomposition $M=m_1^{\alpha_1} \dotsc\,  m_r^{\alpha_r}$, we call $m_i$ a \textit{formal edge} of $M$ if $\alpha_i>0$.  Furthermore, we say that $m_i$ is an \textit{edge} of $M$ whenever $m_i$ is a formal edge for for at least one edge decomposition of $M$.  

Whenever $m_i$ is formal edge of $M$, we use the following notation
\begin{align*}
    \frac{1}{m_i} M:=m_1^{\alpha_1}\dotsc m_i^{\alpha_i-1}\dotsc\, m_r^{\alpha_r}.
\end{align*}
Similarly, we denote by ${m_i} M$ the monomial $m_1^{\alpha_1}\dotsc\, m_i^{\alpha_i+1}\dotsc\, m_r^{\alpha_r}$.  Finally, we say that a variable $x_j$ is \textit{incident to} an an edge $m_i$ whenever $x_j$ divides $m_i$ in the polynomial ring $P$.  
\end{terminology}

With these considerations in mind, we are now ready to construct an ordering of the generators of $I(G)^k$ for a quadratic monomial ideal $I(G)$ with linear quotients.

\begin{constr}\label{ordering}
Let $I(G)\subseteq P$ be a quadratic monomial ideal with linear quotient ordering $m_1, \dotsc, m_r$. For $k\in\mathbb{N}$, consider the set of $\binom{r+k-1}{k}$ formal combinations 
\[
     \mathcal{S}(G)^k=\{m_1^{\alpha_1} \dotsc\, m_r^{\alpha_r} \ | \ \alpha_1 + \dotsc + \alpha_r = k\}.
\]
Next we form a list whose entries are the elements of $\mathcal{S}(G)^k$ ordered according to the \textit{reverse lexicographic (revlex)} ordering with respect to edge decompositions, denoted by $\mathcal{R}(G)^k$.

More explicitly, $\mathcal{R}(G)^k$ is ordered as follows:
\begin{align*}
    m_1^{\alpha_1} \dotsc m_r^{\alpha_r}\,\,\, \text{precedes}\,\,\, m_1^{\beta_1} \dotsc m_r^{\beta_r}&\iff\exists\, i\in\{1,\dots,r\}\,\,\,\text{such that}\,\,\,\alpha_j=\beta_j\,\,\,\text{for}\,\,\, j>i\,\,\,\text{and}\,\,\,\alpha_i<\beta_i\\
    &\iff\,\,\text{the last nonzero entry of the vector}\,\,\alpha-\beta\,\,\text{is negative},
\end{align*}
 which results in the following ordering:
 \begin{displaymath}
     \mathcal{R}(G)^k=\left(m_1^k, m_1^{k-1}m_2,\dots,m_2^k, m_1^{k-1}m_3, m_1^{k-2}m_2m_3,\dots, m_r^k\right).
 \end{displaymath}
 
As noted above, $\mathcal{R}(G)^k$ forms a generating set for $I(G)^k$, but is not minimal when it has repetitions.  Thus we call $M\in{\mathcal{R}(G)^k}$ a \textit{representative} 
if there is no element $M'$ preceding $M$ in ${\mathcal{R}(G)^k}$ such that $M$ and $M'$ have the same vertex decompositions (i.e., are equal in $P$). 

To complete our construction, we define the ordered list $\widetilde{{\mathcal{R}(G)^k}}$, which retains the ordering of ${\mathcal{R}(G)^k}$, but removes all non-representatives, so that the entries of $\widetilde{{\mathcal{R}(G)^k}}$ form a minimal generating set of $I(G)^k$.  We show in Theorem \ref{mainthm} that $\widetilde{{\mathcal{R}(G)^k}}$ is a linear quotient ordering.
\end{constr}

We illustrate this construction in the following example.

\begin{ex}
Returning to Example \ref{cycle example}, we have  
\begin{align*} 
{\mathcal{R}(\mathcal{C}_4)^2}
&= \left(m_1^2, m_1m_2, m_2^2, m_1m_3, m_2m_3, m_3^2, m_1m_4, m_2m_4, m_3m_4, m_4^2\right) \\ 
&= \left(x_1^2x_2^2, x_1x_2^2x_3, x_2^2x_3^2, x_1x_2x_3x_4, x_2x_3^2x_4, x_3^2x_4^2, x_1^2x_2x_4, x_1x_2x_3x_4, x_1x_3x_4^2, x_1^2x_4^2\right).
\end{align*}

Since $m_1m_3=x_1x_2x_3x_4$ and $m_2m_4=x_1x_2x_3x_4$ have the same vertex decompositions (and there are no other repetitions), we have that all elements of ${\mathcal{R}(\mathcal{C}_4)^2}$ are representatives except for $m_2m_4$.  Thus, according to our construction we remove $m_2m_4$ to get
\begin{displaymath}
    \widetilde{{\mathcal{R}(\mathcal{C}_4)^2}}=\left(x_1^2x_2^2, x_1x_2^2x_3, x_2^2x_3^2, x_1x_2x_3x_4, x_2x_3^2x_4, x_3^2x_4^2, x_1^2x_2x_4, x_1x_3x_4^2, x_1^2x_4^2\right).
\end{displaymath}
\end{ex}

The next example demonstrates that the lexicographic ordering of $\mathcal{S}(G)^k$ with respect to edge decompositions does not, in general, yield a linear quotient ordering.
 
\begin{ex}\label{counterex}  Consider the edge ideal $I(G)$ of the graph $G$ pictured below.
\begin{center}
\begin{tikzpicture}[scale = 0.75]
\node (a) at (2, 0) {$x_1$};
\node (b) at (1, {sqrt(3)}) {$x_2$};
\node (c) at (-1, {sqrt(3)}) {$x_3$};
\node (d) at (-2, 0) {$x_4$};
\node (e) at (-1, {-sqrt(3)}) {$x_5$};
\node (f) at (1, {-sqrt(3)}) {$x_6$};

\path [thick]
(a) edge node {} (c)
(a) edge node {} (b)
(c) edge node {} (d)
(a) edge node {} (e)
(c) edge node {} (e)
(e) edge node {} (f);
\end{tikzpicture}
\end{center}
It is easy to see that the following is a linear quotient ordering of $I(G)$:
\begin{align*}
   I(G)=\{m_1=x_1x_3, m_2=x_1x_2, m_3=x_3x_4, m_4=x_1x_5, m_5=x_3x_5, m_6=x_5x_6\}.
\end{align*}
The lexicographic ordering of the elements of $\mathcal{S}(G)^2$ forms a minimal generating set of $I(G)^2$ (as $\mathcal{S}(G)^2$ has no repetitions) and is given by
\begin{align*}
    I(G)^2=(m_1^2, m_1m_2, m_1m_3, m_1m_4, m_1m_5, m_1m_6, m_2^2,\dots,m_6^2).
\end{align*}
Observe that this is not a linear quotient ordering. Indeed, $m_1m_6=x_1x_3x_5x_6$ precedes $m_2m_3=x_1x_2x_3x_4$, but by quick inspection, the monomials $M$ of degree four such that $\frac{M}{\mathrm{gcd}(M, m_2m_3)}$ is linear and divides $\frac{m_1m_6}{\mathrm{gcd}(m_1m_6, m_2m_3)}=x_5x_6$ are precisely:
\begin{displaymath}
x_1x_2x_3x_5,\, x_1x_2x_4x_5,\, x_1x_3x_4x_5,\, x_2x_3x_4x_5,\, x_1x_2x_3x_6,\, x_1x_2x_4x_6,\, x_1x_3x_4x_6,\, x_2x_3x_4x_6,
\end{displaymath} 
and none of these precedes $m_2m_3=x_1x_2x_3x_4$ under the lexicographic ordering.
\end{ex}
 
\begin{remk}\label{counterrem}
    Observe that Example \ref{counterex} is a counterexample to \cite[Conjecture 4.1]{EFHHKM}, which states that if $I(G)^q$ has linear quotients for some $q\in\mathbb{N}$, then the resulting \textit{efficient ordering} (see \cite{EFHHKM} for the definition) is a linear quotients ordering of $I(G)^s$ for all $s\geq q$. 
 Indeed, in Example \ref{counterex}, the ordering on $I(G)^2$ coincides with the efficient ordering constructed from the ordering of $I(G)$.  Nevertheless, \cite[Conjecture 4.1]{EFHHKM} remains open for $q>1$. 
\end{remk}

We aim to prove that the list $\widetilde{{\mathcal{R}(G)^k}}$ in Construction \ref{ordering} yields a linear quotient ordering of $I(G)^k$; however, to simplify our proof, we reduce to working with
${\mathcal{R}(G)^k}$ instead.  For this, we need a simple lemma and some notation.

\begin{notat}\label{quotientnotation}

Given an ordered list $\mathcal{O}$ of monomials in $P$ and a monomial $M$ from the list, we denote by $Q^{\mathcal{O}}(M)$ the ideal quotient corresponding to the monomial $M$ under the ordering $\mathcal{O}$; that is, if $\mathcal{O}=(m_1,\dots,m_r)$ then
    \begin{align*}
        Q^{\mathcal{O}}(m_i)=(m_1,\dots,m_{i-1}):(m_i)
    \end{align*}
    for all $2\leq i\leq r$.  We suppress the superscript $\mathcal{O}$ and write $Q(M)$ to denote the ideal quotient corresponding to the monomial $M$ wherever the ordering is clear from context.

Note that for $M$ a non-representative in $\mathcal{R}(G)^k$, we have $Q(M)=P$.  We say that ${\mathcal{R}(G)^k}$ is a \textit{linear quotient ordering (up to repetition)} if for each generator $M$ in $\mathcal{R}(G)^k$, the ideal quotient $Q(M)$ is either generated by a subset of the variables of $P$ or is equal to the polynomial ring $P$.
\end{notat}

The next lemma follows directly from the definitions of $\widetilde{{\mathcal{R}(G)^k}}$ and ${\mathcal{R}(G)^k}$ in Construction \ref{ordering}.

\begin{lemma}\label{repetitions}
   Adopt notation in Construction \ref{ordering}.  Then $\widetilde{{\mathcal{R}(G)^k}}$ is a linear quotient ordering if and only if ${\mathcal{R}(G)^k}$ is a linear quotient ordering (up to repetition).
\end{lemma}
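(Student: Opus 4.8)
The plan is to show that passing from $\mathcal{R}(G)^k$ to $\widetilde{\mathcal{R}(G)^k}$ — i.e., deleting the non-representatives — does not change whether the ordering has linear quotients. The key observation is that deleting a non-representative $M_i^k$ from the list does not alter any of the ideal quotients $Q(M_j^k)$ for the \emph{representatives} $M_j^k$ that remain: since $M_i^k$ equals (as a monomial in $P$) some earlier entry $M_{i'}^k$ with $i' < i$, the ideal $(M_1^k,\dots,M_{j-1}^k)$ computed from the truncated list coincides with the ideal computed from the full list for every $j$, because removing a generator that is already present among the earlier generators leaves the ideal unchanged. Thus for each representative $M_j^k$, its ideal quotient against the list of all preceding representatives equals $Q(M_j^k)$.

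First I would fix notation: write $\widetilde{\mathcal{R}(G)^k} = (N_1,\dots,N_s)$ for the sublist of representatives in order, and for each $t$ let $j(t)$ be the index in $\mathcal{R}(G)^k$ with $M_{j(t)}^k = N_t$. Next I would prove the ideal equality $(N_1,\dots,N_{t-1}) = (M_1^k,\dots,M_{j(t)-1}^k)$ as ideals of $P$. The inclusion $\subseteq$ is immediate since each $N_i$ with $i < t$ is some $M_{j(i)}^k$ with $j(i) < j(t)$. For $\supseteq$, take any $M_\ell^k$ with $\ell < j(t)$: if it is a representative it appears among $N_1,\dots,N_{t-1}$; if not, by definition of representative there is an earlier entry $M_{\ell'}^k$ with $\ell' < \ell$ and the same vertex decomposition, hence $M_\ell^k = M_{\ell'}^k$ in $P$, and iterating this finitely many times (the indices strictly decrease) we reach a representative with index $< j(t)$, so $M_\ell^k$ lies in $(N_1,\dots,N_{t-1})$. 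This gives the quotient identity
\[
((N_1,\dots,N_{t-1}):(N_t)) = Q(M_{j(t)}^k).
\]

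With this identity in hand the equivalence is formal. If $\mathcal{R}(G)^k$ is a linear quotient ordering up to repetition, then for each representative $M_{j(t)}^k$ the quotient $Q(M_{j(t)}^k)$ is generated by a subset of the variables (it cannot be all of $P$ since $M_{j(t)}^k$ is a representative, so $N_t \notin (N_1,\dots,N_{t-1})$, forcing the quotient to be proper); hence $((N_1,\dots,N_{t-1}):(N_t))$ is generated by variables for all $t$, i.e., $\widetilde{\mathcal{R}(G)^k}$ is a linear quotient ordering. Conversely, if $\widetilde{\mathcal{R}(G)^k}$ is a linear quotient ordering, then every $Q(M_i^k)$ with $M_i^k$ a representative equals $((N_1,\dots,N_{t-1}):(N_t))$ for the appropriate $t$ and is thus generated by variables, while every $Q(M_i^k)$ with $M_i^k$ a non-representative equals $P$ by the remark in Notation \ref{quotientnotation}; so $\mathcal{R}(G)^k$ is a linear quotient ordering up to repetition.

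I do not expect a serious obstacle here; the only point requiring a little care is the $\supseteq$ direction of the ideal equality, where one must handle a chain of non-representatives collapsing down to a representative — a straightforward induction on the index. The rest is bookkeeping, relying on the elementary fact that adjoining to a generating set an element already in the ideal does not change the ideal, together with the definition of "representative" ensuring the relevant quotients stay proper.
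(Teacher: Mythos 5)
Your proof is correct and takes the same approach the paper intends; the paper simply declares the lemma to follow directly from the definitions, and your argument is a careful spelling-out of exactly that (the ideal equality $(N_1,\dots,N_{t-1})=(M_1^k,\dots,M_{j(t)-1}^k)$, the observation that degree considerations force $Q(M_i^k)=P$ precisely for non-representatives, and the resulting identity of quotients).
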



Now we state and prove two technical lemmas about our revlex ordering $\mathcal{R}(G)^k$ which we use in the proof of our main result in this section.  


\begin{lemma}\label{reduction-lemma}
Adopt notation in Construction \ref{ordering} and Notation \ref{quotientnotation}.  Let $m_i$ be a formal edge of $M:=m_1^{\alpha_1}\dotsc m_i^{\alpha_i}\dotsc m_r^{\alpha_r}\in{\mathcal{R}(G)^k}$.
Then for
$M':=\frac{1}{m_i} M\in{\mathcal{R}(G)^{k-1}}$ we have:
\begin{displaymath}
    Q^{\mathcal{R}(G)^{k-1}}(M') \subseteq Q^{\mathcal{R}(G)^{k}}(M).
\end{displaymath}
\end{lemma}

\begin{proof}
First note that the ideal quotient $Q^{\mathcal{R}(G)^{k-1}}(M')$ is generated by the monomials $\frac{M''}{\text{gcd}(M'',\,  M')}$ for all generators $M''$ which precede $M'$ in ${\mathcal{R}(G)^{k-1}}$. For each such $M''$,
it remains to show that the generator $m_i M''$ precedes $M$ in ${\mathcal{R}(G)^k}$, since then we have
\begin{align*}
    \frac{M''}{\text{gcd}(M'',\, M')} =\frac{m_i   M''}{\text{gcd}(m_i   M'',\, m_i   M')}=\frac{m_i   M''}{\text{gcd}(m_i   M'',\, M)}  \in Q^{\mathcal{R}(G)^{k}}(M).
\end{align*}  
But, indeed, this follows from the fact that $M''$ precedes $M'$ in the revlex ordering ${\mathcal{R}(G)^{k-1}}$.
\end{proof}

    

 \begin{lemma}\label{cases-lemma}
Adopt notation in Construction \ref{ordering} and suppose that 
\begin{displaymath}
    M=m_1^{\alpha_1} \dotsc\, m_r^{\alpha_r}=x_1^{a_1}\dotsc\, x_n^{a_n}\quad\text{precedes}\quad M'=~m_1^{\beta_1} \dotsc\, m_r^{\beta_r}=x_1^{b_1}\dotsc\, x_n^{b_n}
\end{displaymath}
in ${\mathcal{R}(G)^{k}}$, with $M'$ a representative.
    Let $p$ be the largest index such that $\alpha_p > 0$ and $q$ be the largest index such that $\beta_q > 0$.  Then at least one of the following is true:
    
    \begin{enumerate}
        \item [(a)] The elements $M$ and $M'$ share a formal edge $m_i$
        for some $i\in\{1,\dots,r\}$.
        \item [(b)]  
        There is a variable $x_i$ such that $b_i>a_i$ and a formal edge $m_j$ of $M'$ which is incident to $x_i$, such that for all formal edges $m_{\ell}$ of $M$, the element $\frac{1}{m_{\ell}} M$ precedes $\frac{1}{m_j} M'$ in  ${\mathcal{R}(G)^{k-1}}$. 
        \item [(c)] 
        The generator $\frac{1}{m_q} M'$ in  ${\mathcal{R}(G)^{k-1}}$ divides $M$ in $P$, with $p<q$.
    \end{enumerate}
\end{lemma}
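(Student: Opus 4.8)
The plan is to analyze the positions of $M^k_f$ and $M^k_s$ in the revlex ordering via the indices $p$ and $q$, splitting into the cases $p > q$, $p = q$, and $p < q$, and to show case (a) covers $p \geq q$ while case (c) — or failing that, case (b) — covers $p < q$. First I would dispose of the easy direction: since $M^k_f$ precedes $M^k_s$ and $p$ (resp.\ $q$) is the largest index with $\alpha_p > 0$ (resp.\ $\beta_q > 0$), the revlex condition forces $p \leq q$; more precisely, reading the definition of revlex in Construction \ref{ordering}, the last nonzero entry of $\alpha - \beta$ is negative, so if $p > q$ then that last nonzero entry would be $\alpha_p > 0$, a contradiction. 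Hence $p \leq q$. If $p = q$, then $\alpha_p > 0$ and $\beta_p > 0$, so $M^k_f$ and $M^k_s$ share the formal edge $m_p$, and we are in case (a). This leaves the genuine case $p < q$.

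So assume $p < q$. The natural candidate for case (c) is the generator $N := \frac{1}{m_q} M^k_s \in \mathcal{R}(G)^{k-1}$: we would like $N \mid M^k_f$ in $P$. If this holds, we are done via (c). Otherwise there is a variable $x_i$ with $\deg_{x_i}(N) > a_i$; equivalently, writing $c = (c_1,\dots,c_n)$ for the vertex exponent vector of $N$, we have $c_i > a_i$ for some $i$. Note $c_i \leq b_i$ and $b_i$ and $c_i$ differ only at the (at most two) variables incident to $m_q$, so $x_i$ is incident to $m_q$ and $b_i > a_i$. I would then try to produce, from this $x_i$ and the formal edge $m_q$ (playing the role of $m_j$ in (b)), the required precedence: for every formal edge $m_\ell$ of $M^k_f$, the element $\frac{1}{m_\ell} M^k_f$ precedes $\frac{1}{m_q} M^k_s = N$ in $\mathcal{R}(G)^{k-1}$. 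This is where the argument has real content. The key observation is that removing $m_\ell$ from $M^k_f$ (with $\ell \leq p$) yields an exponent vector that agrees with $\alpha$ in all coordinates $> p$ except possibly a decrease at $\ell$, whereas $N$ has exponent vector agreeing with $\beta$ in all coordinates except a decrease at $q$; since $p < q$ and $\beta_q > 0$ while the $\frac{1}{m_\ell}M^k_f$ side has zero in coordinate $q$ (indeed in all coordinates $> p$), comparing the two exponent vectors from the top, the first place they differ is at index $q$ (or higher, if $\beta$ has support above $q$ — but it doesn't, $q$ being maximal), where $\frac{1}{m_\ell}M^k_f$ has the strictly smaller entry. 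Hence $\frac{1}{m_\ell}M^k_f$ precedes $N$ in revlex, giving case (b).

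The main obstacle I anticipate is the bookkeeping in the last step when $p < q$ but the formal edge $m_\ell$ being removed from $M^k_f$ could have $\ell = p$, driving $\alpha_p$ to $0$ and thereby lowering the effective "top index" of $\frac{1}{m_\ell}M^k_f$; one must check the comparison with $N$ still comes out in the right direction regardless of whether the new top index of $\frac{1}{m_\ell}M^k_f$ is $p$, is some index below $p$, or (if $M^k_f = m_p^k$) the vector becomes supported only below $p$. In every sub-case the decisive coordinate is $q$: since $q > p \geq \ell$ and $q$ is the maximal support index of $\beta$, both $\frac{1}{m_\ell}M^k_f$ and everything above coordinate $q$ in $N$'s vector vanish except $N$ itself has $\beta_q - 1 \geq 0$ there while potentially still being positive, and one traces through that the last nonzero entry of the difference vector lands at an index $\geq q$ with the correct sign. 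I would also need to confirm the hypothesis that $M^k_s$ is a representative is used exactly where it is needed — it guarantees no earlier generator with the same vertex decomposition, which is what lets us speak unambiguously of "the" quotient behavior, though for this particular lemma (a purely combinatorial statement about the listed generators) I expect it is only invoked implicitly to ensure $q$ is well-defined for the representative we care about. A secondary subtlety is the degenerate possibility that $M^k_f$ has a single formal edge of high multiplicity, so that there is effectively only one choice of $m_\ell$; the argument above handles this uniformly, but it is worth stating explicitly.
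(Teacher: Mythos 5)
Your overall structure --- handle $p \geq q$ via (a), then in the $p < q$ case try (c) and fall back on (b) --- is logically equivalent to the paper's contrapositive argument, but the execution of the $p < q$ step has a genuine gap: you take $m_j = m_q$ as the formal edge of $M^k_s$ witnessing condition (b), and this is the wrong choice. Two problems arise. First, the intermediate claim that $c_i > a_i$ forces $x_i$ to be incident to $m_q$ is false; from $c_i > a_i$ and $c_i \leq b_i$ all you can extract is $b_i > a_i$. Second, and more seriously, the revlex precedence of $\frac{1}{m_\ell} M^k_f$ before $\frac{1}{m_q} M^k_s$ can fail: the exponent at index $q$ on the right-hand side is $\beta_q - 1$, which vanishes when $\beta_q = 1$, and the comparison then drops to indices $p < j < q$, where $\beta_j$ may also vanish, leaving the last nonzero entry of the difference vector at some index $\leq p$ with uncontrolled sign. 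For a concrete failure, take the star graph on a center $c$ and leaves $a,b,d,e$ with linear quotient ordering $m_1=ca$, $m_2=cb$, $m_3=cd$, $m_4=ce$, and $k=2$. Let $M^2_f = m_2^2 = c^2b^2$ and $M^2_s = m_1m_4 = c^2ae$, so $p = 2 < 4 = q$, (a) fails, $\beta_q = 1$, and $\frac{1}{m_4}M^2_s = m_1 = ca$ does not divide $M^2_f$. Here $c_a = 1 > 0 = a_a$ is the offending exponent, yet $a$ is \emph{not} incident to $m_q$; moreover $\frac{1}{m_2}M^2_f = m_2$ does not precede $\frac{1}{m_4}M^2_s = m_1$ in $\mathcal{R}(G)^1$, so (b) is not witnessed by $m_j = m_q$. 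It is witnessed by $m_j = m_1$: then $\frac{1}{m_1}M^2_s = m_4$, which $m_2$ does precede.

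The paper's maneuver is to always choose $m_j$ with $j < q$. It first establishes $\beta_q = 1$: if $\beta_q > 1$, then $\frac{1}{m_j}M^k_s$ retains $m_q$ as a formal edge for \emph{every} choice of $m_j$, and since all formal edges of $\frac{1}{m_\ell}M^k_f$ have index $\leq p < q$, the revlex precedence is immediate and (b) holds. With $\beta_q = 1$, to prove (c) it suffices to rule out a variable $x_i$ with $b_i > a_i$ dividing some formal edge $m_j$ of $M^k_s$ with $j < q$; any such pair again forces (b), because for $j \neq q$ the generator $\frac{1}{m_j}M^k_s$ still contains $m_q$, which sits strictly beyond the support of every $\frac{1}{m_\ell}M^k_f$. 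The structural idea you are missing is that the edge removed from $M^k_s$ in (b) must have \emph{low} index, so that the high-index edge $m_q$ survives to anchor the revlex comparison.
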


\begin{proof}
    We prove that if the conditions (a) and (b) are false, then condition (c) must hold.  
    
    First note that under the revlex ordering, we have that $p \leq q$. Moreover, by our assumption that condition (a) is false, we have that $p < q$, as desired. 
    
    Since $M'$ is a representative, we have $M'\neq M$, and thus $b_i > a_i$, for some $i\in\{1,\dots,r\}$,  
    and in particular $x_i$ must divide some formal edge $m_j$ of $M'$.
    
    Now we claim that $\beta_q=1$, because if $\beta_q>1$, then condition (b) must hold. Indeed, $\beta_q>1$ implies that $m_q$ is a formal edge of $\frac{1}{m_j} M'$, but for any $m_{\ell}$ with $\alpha_{\ell}>0$,  we have that $\frac{1}{m_{\ell}} M$ only has formal edges with index at most $p$ and hence strictly less than $q$.  Thus   
    $\frac{1}{m_{\ell}} M$ precedes $\frac{1}{m_j} M'$ in ${\mathcal{R}(G)^{k-1}}$ under our revlex ordering, and thus the condition (b) holds.  
      
      In summary we have $p<q$ and $\beta_q=1$.  Now to show that condition (c) is true, it suffices to show that for all $i\in\{1,\dots,n\}$ and for all $j<q$ with $\beta_j>0$, we have that   $b_i \leq a_i$ whenever $x_i|\,m_j$.  For the sake of contradiction, we suppose that there exist indices $i\in\{1,\dots,n\}$ and $j<q$ with $\beta_j>0$ such that $b_i>a_i$ and $x_i|m_j$.  As before, we have that $\frac{1}{m_j} M'$ has the formal edge $m_q$, but $\frac{1}{m_{\ell}}M$ does not for any $m_{\ell}$ with $\alpha_{\ell}>0$.  Thus condition (b) must be true, which is a contradiction to our assumption. Therefore $\frac{1}{m_q} M'$ divides $M$, as desired.
\end{proof}

\begin{thm}\label{mainthm}
Let $P=\mathsf{k}[x_1,\dots,x_n]$ be a standard graded polynomial ring with $\mathsf{k}$ a field and let $I(G)$ be a quadratic monomial ideal that admits linear quotients. 
Further adopt notation in Construction \ref{ordering}. Then  $\widetilde{{\mathcal{R}(G)^{k}}}$ yields a linear quotient ordering on $I(G)^k$ for all $k\in\mathbb{N}$.
\end{thm}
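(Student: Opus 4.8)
By Lemma~\ref{repetitions}, it suffices to show that ${\mathcal{R}(G)^k}$ is a linear quotient ordering up to repetition; that is, for every representative $M^k_s$ and every generator $M^k_f$ preceding it in ${\mathcal{R}(G)^k}$, the monomial $M^k_f/\mathrm{gcd}(M^k_f,M^k_s)$ is divisible by some variable $x$ for which there exists $h$ with $M^k_h$ preceding $M^k_s$ and $M^k_h/\mathrm{gcd}(M^k_h,M^k_s)=x$; this is the formulation of Lemma~\ref{HWfact} applied to the ordering ${\mathcal{R}(G)^k}$. The plan is to induct on $k$. The base case $k=1$ is the hypothesis that $m_1,\dots,m_r$ is a linear quotient ordering of $I(G)$. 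For the inductive step, fix $M^k_f$ preceding $M^k_s$ (a representative) and split into the three cases supplied by Lemma~\ref{cases-lemma}; in each case we must produce the witness $h$ demanded by Lemma~\ref{HWfact}.

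In case (a), $M^k_f$ and $M^k_s$ share a formal edge $m_i$, so we pass to $M^{k-1}_{f'}:=\frac{1}{m_i}M^k_f$ and $M^{k-1}_{s'}:=\frac{1}{m_i}M^k_s$; since the revlex order is compatible with dividing both sides by the same $m_i$, $M^{k-1}_{f'}$ precedes $M^{k-1}_{s'}$ in ${\mathcal{R}(G)^{k-1}}$, and also $M^{k-1}_{s'}$ is a representative because $M^k_s$ is. Note $M^k_f/\mathrm{gcd}(M^k_f,M^k_s)=M^{k-1}_{f'}/\mathrm{gcd}(M^{k-1}_{f'},M^{k-1}_{s'})$, so the inductive hypothesis gives a variable $x$ dividing this monomial and an index $t$ with $M^{k-1}_t$ preceding $M^{k-1}_{s'}$ and $M^{k-1}_t/\mathrm{gcd}(M^{k-1}_t,M^{k-1}_{s'})=x$; then $h$ indexing $m_i M^{k-1}_t$ works, since $m_i M^{k-1}_t$ precedes $m_i M^{k-1}_{s'}=M^k_s$ (revlex compatibility again) and $m_i M^{k-1}_t/\mathrm{gcd}(m_i M^{k-1}_t,M^k_s)=x$. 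In case (b) we are handed a variable $x_i$ with $b_i>a_i$, a formal edge $m_j$ of $M^k_s$ incident to $x_i$, and the fact that $\frac{1}{m_\ell}M^k_f$ precedes $\frac{1}{m_j}M^k_s$ for every formal edge $m_\ell$ of $M^k_f$; here $x_i$ divides $M^k_f/\mathrm{gcd}(M^k_f,M^k_s)$ is generally false, so instead the right move is: pick any formal edge $m_\ell$ of $M^k_f$, apply the inductive hypothesis to $\frac{1}{m_\ell}M^k_f$ versus a suitable predecessor of $\frac{1}{m_j}M^k_s$ (or directly use that $\frac{1}{m_j}M^k_s$ has $m_j M^k_f$ as... ) — more carefully, we take $h$ to index $m_j\cdot\big(\frac{1}{m_\ell}M^k_f\big)$ when this divides appropriately, using that $\frac{1}{m_\ell}M^k_f$ precedes $\frac{1}{m_j}M^k_s$ to conclude $m_j\cdot\frac{1}{m_\ell}M^k_f$ precedes $M^k_s$, and checking the quotient is a single variable incident to $m_\ell$ that divides $M^k_f/\mathrm{gcd}(M^k_f,M^k_s)$. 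In case (c), $\frac{1}{m_q}M^k_s$ divides $M^k_f$ with $p<q$; then the variables of $m_q$ that are "missing" from $M^k_f$ lie in $M^k_s$, and since $M^k_s/\mathrm{gcd}=$ (a product of at most two variables from $m_q$ not covered by $\frac{1}{m_q}M^k_s$ in $M^k_f$), one directly exhibits the witness $h$ by taking $h$ to index $m_\ell\cdot\frac{1}{m_q}M^k_s$ for an appropriate $\ell<q$ coming from the linear quotient property of $I(G)$ applied to $m_q$ against the other formal edges — roughly, $\frac{1}{m_q}M^k_s$ divides $M^k_f$ forces $M^k_s$ and $M^k_f$ to differ only in the $m_q$-slot up to vertex decomposition, and linear quotients of $I(G)$ at the edge $m_q$ supplies the needed earlier edge.

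The main obstacle I anticipate is case (c), and more precisely the bookkeeping that translates "$\frac{1}{m_q}M^k_s$ divides $M^k_f$ in $P$" (a statement about vertex decompositions) into a statement about edge decompositions that lets us invoke the $k=1$ linear quotient hypothesis at the single edge $m_q$; one must be careful that $M^k_f$, which has $p<q$, really does contain enough of $m_q$'s incident variables to pin down which variable $x$ to use, and that the resulting witness monomial $m_h\cdot\frac{1}{m_q}M^k_s$ genuinely precedes $M^k_s$ in revlex (this is where $h<q$, guaranteed by the linear quotient ordering of $I(G)$, is essential). A secondary subtlety is making sure in every case that the predecessor we feed to the inductive hypothesis is itself a \emph{representative} in ${\mathcal{R}(G)^{k-1}}$ — in case (a) this is inherited from $M^k_s$, but in cases (b) and (c) one should either argue the relevant $(k-1)$-generator is a representative or, better, avoid needing the inductive hypothesis there and argue directly from $I(G)$'s linear quotients together with the revlex comparisons already provided by Lemma~\ref{cases-lemma}. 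I would organize the write-up so that case (a) is the only place the induction on $k$ is used, with (b) and (c) handled by explicit constructions of the witness edge.
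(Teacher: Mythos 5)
Your skeleton matches the paper exactly: reduce to $\mathcal{R}(G)^k$ via Lemma~\ref{repetitions}, induct on $k$, and split on the three cases of Lemma~\ref{cases-lemma}. Case~(a) is handled correctly and in the same way as the paper, including the observation (justified via Lemma~\ref{reduction-lemma}) that the predecessor $\tfrac{1}{m_i}M^k_s$ fed to the inductive hypothesis is itself a representative. Your instinct that case~(c) should come from the $k=1$ linear quotient property rather than the inductive hypothesis is also right — the paper does exactly that.

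However, cases~(b) and~(c) are left as sketches and both contain genuine gaps. In case~(b), ``pick any formal edge $m_\ell$ of $M^k_f$'' does not work: the choice of $m_\ell$ is load-bearing. Writing $m_j = x_i x_g$, the paper chooses $m_\ell$ to be a formal edge of $M^k_f$ incident to $x_g$ whenever $x_g$ divides $M^k_f$; only with this choice does the divisibility
\[
\gcd(M^k_f, M^k_s)\ \big\vert\ \gcd\!\left(M^k_f,\ \tfrac{m_\ell}{m_j}M^k_s\right)
\]
hold, which is what allows the inductive conclusion for the pair $\tfrac{1}{m_\ell}M^k_f,\ \tfrac{1}{m_j}M^k_s$ to be lifted back to degree $k$ exactly as in case~(a). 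With an arbitrary $m_\ell$ the exponent of $x_g$ in $\tfrac{m_\ell}{m_j}M^k_s$ can drop below that in $\gcd(M^k_f,M^k_s)$, breaking the argument. Also, the explicit witness $m_j\cdot\tfrac{1}{m_\ell}M^k_f$ you float is not obviously a colon generator giving a single variable, and you acknowledge the verification (``when this divides appropriately'') without supplying it; the paper avoids this by routing through the inductive hypothesis, not an explicit witness.

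For case~(c), the difficulty you identify is real, but the resolution is not sketched. The paper proceeds as follows: since $\tfrac{1}{m_q}M^k_s$ divides $M^k_f$, the quotient $M^k_f/\gcd(M^k_f,M^k_s)$ has degree at most two, and because $M^k_s$ is a representative it is actually linear or quadratic. If quadratic, write it as $x_{c_1}x_{c_2}$ with $m_q=x_ix_j$ and $x_i,x_j\notin\{x_{c_1},x_{c_2}\}$, pick a formal edge $m_\ell=x_{c_1}x_{c_3}$ of $M^k_f$ (which exists since $x_{c_1}\mid M^k_f$), then use Lemma~\ref{HWfact} for $I(G)$ on the pair $m_\ell, m_q$ (valid since $p<q$) to obtain $m_{q'}$ preceding $m_q$ with $m_{q'}\in\{x_ix_{c_1},x_jx_{c_1},x_ix_{c_3},x_jx_{c_3}\}$. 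The first two choices of $m_{q'}$ give $x_{c_1}$ in the quotient via the generator $m_{q'}\cdot\tfrac{1}{m_q}M^k_s$; the last two give $x_{c_2}$ via $\tfrac{m_{q'}}{m_\ell}M^k_f$, which one checks precedes $M^k_s$ because $m_\ell$ is a formal edge of $M^k_f$ and $p,q'<q$. These details — especially the split on which element of the four-element set $m_{q'}$ equals, and exhibiting two different witness monomials accordingly — are the substance of case~(c) and are absent from your sketch.
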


\begin{proof}
By Lemma \ref{repetitions} it suffices to show that ${\mathcal{R}(G)^{k}}$ yields a linear quotient ordering (up to repetition). We use induction on the power $k$. The result holds for $k=1$ by hypothesis, since ${\mathcal{R}(G)^{1}}$ coincides with the linear quotient ordering of $I(G)$. 

Now let $k>1$ and assume ${\mathcal{R}(G)^{k-1}}$ is a linear quotient ordering (up to repetition). We need to show that ${\mathcal{R}(G)^{k}}$ also yields a linear quotient ordering (up to repetition). Suppose $M$ precedes $M'$ in ${\mathcal{R}(G)^{k}}$. Without loss of generality, we may assume that $M'$ is a representative; otherwise, $Q^{\mathcal{R}(G)^k}(M')=P$, as desired.
Now we apply Lemma \ref{cases-lemma} and examine each of the three possibilities separately.  As in the lemma, we consider the edge and vertex decompositions 
\begin{displaymath}
    M=m_1^{\alpha_1} \dotsc\, m_r^{\alpha_r}=x_1^{a_1}\dotsc\, x_n^{a_n}\quad\text{and}\quad M'=m_1^{\beta_1} \dotsc\, m_r^{\beta_r}=x_1^{b_1}\dotsc\, x_n^{b_n}
\end{displaymath}
and let $p$ and $q$ be the largest indices such that $\alpha_p > 0$ and $\beta_q > 0$.\vspace{0.15cm}

\noindent\textbf{(a)}  First we assume that $M$ and $M'$ share a common formal edge $m_i$ and define 
\begin{align*}
    M'_{i}:=\frac{1}{m_i} M'\quad\text{and}\quad M_{i}:=\frac{1}{m_i} M.
\end{align*}
Notice $M'_{i}$ is a representative in $\mathcal{R}(G)^{k-1}$ because by Lemma \ref{reduction-lemma} we have $Q^{\mathcal{R}(G)^{k-1}}(M'_{i})\subseteq Q^{\mathcal{R}(G)^{k}}(M')$.  By the inductive hypothesis and Lemma \ref{HWfact}, there is some variable $x\in{Q^{\mathcal{R}(G)^{k-1}}(M'_{i}})$ such that 
    \begin{align*}
        x\,\Big|\,\frac{M_{i}}{\text{gcd}(M_{i}, M'_{i})}.
    \end{align*}
    Thus by Lemma \ref{reduction-lemma} we have $x\in Q^{\mathcal{R}(G)^{k-1}}(M'_{i})\subseteq Q^{\mathcal{R}(G)^{k}}(M')$. Moreover, note that  
    \begin{align*}
        x\,\Big|\,\frac{M_{i}}{\text{gcd}(M_{i}, M'_{i})}=\frac{m_i   M_{i}}{\text{gcd}(m_i   M_{i}, m_i   M'_{i})}=\frac{M}{\text{gcd}(M, M')}.
    \end{align*} 
    Now Lemma \ref{HWfact} yields the desired result.
   \vspace{0.15cm}
   
\noindent\textbf{(b)} 
    Next we assume there is a variable $x_i$ such that $b_i>a_i$ and a formal edge $m_j$ of $M'$ which is incident to $x_i$, such that for all formal edges $m_{\ell}$ of $M$, the generator $M_{\ell}:=\frac{1}{m_{\ell}} M$ precedes $M'_j:=\frac{1}{m_j} M'$ in  ${\mathcal{R}(G)^{k-1}}$.  We write $m_j=x_ix_g$ for some $g\in\{1,\dots,n\}$ (possibly $g=i$) and choose a formal edge $m_\ell$ of $M$ as follows.  If $x_g$ does not divide $M$, select $m_{\ell}$ arbitrarily; otherwise, let $m_\ell$ be any formal edge of $M$ incident to $x_g$. 
    
    By the same argument as in (a), it suffices to show that 
    \begin{align*}
        \frac{M_{\ell}}{\text{gcd}(M_{\ell}, M'_j)}\,\Big|\, \frac{M}{\text{gcd}(M, M')}.
    \end{align*}
    Equivalently, we need to show that ${\text{gcd}(M, M')}\,\vert\,\text{gcd}(M, m_{\ell}M'_j)$, but this holds since $b_i>a_i$ 
    and since $m_\ell$ is divisible by $x_g$ whenever $x_g$ divides $M$.\vspace{0.15cm}
    
\noindent\textbf{(c)} Otherwise by Lemma \ref{cases-lemma}, we have $p<q$ and $M'_q:=\frac{1}{m_q} M'$ divides $M$ in $P$.
 Thus the degree of $\frac{M}{\text{gcd}(M, M')}$ is at most 2, and further since $M'$ is a representative, it must be linear or quadratic. 
    
    If it is linear, by Lemma \ref{HWfact} we are done. So we may assume that 
    \begin{align*}
        \frac{M}{\text{gcd}(M, M')}=x_{c_1}x_{c_2},
    \end{align*}
    for some variables $x_{c_1},x_{c_2}\in P$; we do not require that $c_1$ and $c_2$ are distinct, nor that $\{x_{c_1},x_{c_2}\}$ is an edge of $G$.  By Lemma \ref{HWfact} it suffices to show that $x_{c_1}\in{Q^{\mathcal{R}(G)^k}(M')}$ or $x_{c_2}\in{Q^{\mathcal{R}(G)^k}(M')}$.
    
    Let $m_q=x_ix_j$,
    so that $M'=x_ix_jM'_{q}$ and $M=x_{c_1}x_{c_2}M'_{q}$.  Observe that $x_i,x_j\notin\{x_{c_1},x_{c_2}\}$, because otherwise $\frac{M}{\text{gcd}(M, M')}$ is not quadratic.  Since $x_{c_1}$ divides $M$, there must be some formal edge $m_\ell$ of $M$ incident to $x_{c_1}$. Let $m_\ell=x_{c_1}x_{c_3}$, where $c_3$ need not be distinct from $c_1$ or $c_2$. 
 Since $p<q$, we have that $m_\ell$ precedes $m_q$ in the linear quotient ordering of $I(G)$, and therefore there is an edge $m_{q'}$ preceding $m_q$ in the linear quotient ordering of $I(G)$ with 
    \begin{align*}
        m_{q'}\in\{x_ix_{c_1},x_jx_{c_1},x_ix_{c_3},x_jx_{c_3}\}.
    \end{align*}
   
    Since $q'<q$, it follows that $m_{q'}M'_{q}$ precedes $M'$ in ${\mathcal{R}(G)^{k}}$. Thus, if $m_{q'}\in\{x_ix_{c_1},x_jx_{c_1}\}$, we have
    \begin{align*}
        \frac{m_{q'}M'_{q}}{\text{gcd}(m_{q'}M'_{q}, M')}=x_{c_1}\in{Q^{\mathcal{R}(G)^k}(M')},
    \end{align*}
and we are done.  Thus we may assume that $m_{q'}\in\{x_ix_{c_3},x_jx_{c_3}\}$. 
    Note that $\frac{m_{q'}}{m_{\ell}}M$ precedes $M'$ in ${\mathcal{R}(G)^{k}}$ since $m_\ell$ is a formal edge of $M$ and $p,q'<q$.  Thus the next observation completes the proof
    \begin{align*}
        \frac{\frac{m_{q'}}{m_{\ell}}M}{\text{gcd}(\frac{m_{q'}}{m_{\ell}}M, M')}=x_{c_2}\in{Q^{\mathcal{R}(G)^k}(M')}.
    \end{align*}
\end{proof}

The following corollary, which follows immediately from Theorem \ref{mainthm}, recovers a natural corollary of \cite[Theorems 10.1.9 and 10.2.5]{HHbook}; see also \cite[Theorem 2.6]{dali15}.

\begin{cor}\label{recover}
    A quadratic monomial ideal $I(G)$ has linear quotients if and only if $I(G)^k$ has linear quotients for all $k\in\mathbb{N}$. 
\end{cor}

\section{Powers of edge ideals of whisker graphs}

In this section, we illustrate the utility of our linear quotient ordering in Construction \ref{ordering}.  In particular, 
we apply Theorem \ref{mainthm} to obtain explicit formulas for the projective dimension and the Betti numbers of the powers of the edge ideal of a \textit{whisker graph}; that is, a graph $\mathcal{W}_{r,\ell}$ with $r+\ell+2$ vertices and $r+\ell+1$ edges
of the following form:

\begin{center}
\begin{tikzpicture}[scale = 1]
    \node (xL) at (0,0) {$x_L$};
    \node (xR) at (3,0) {$x_R$};
    \node (y1) at (0,2) {$z_1$};
    \node (y2) at (-1.4, 1.4) {$z_2$};
    \node (y3) at (-2, 0) {$z_3$};
    \node (e) at (-1.4, -1.4) {$\ddots$};
    \node (yn) at (0,-2) {$z_{\ell}$};
    \node (z1) at (3,2) {$y_1$};
    \node (z2) at (4.4, 1.4) {$y_2$};
    \node (z3) at (5,0) {$y_3$};
    \node (f) at (4.4, -1.4) {$\adots$};
    \node (zm) at (3,-2) {$y_r$};
    
    \draw [thick]
    (xL) edge node [below] {$a$} (xR)
         edge node [right] {$c_1$} (y1)
         edge node [above right] {$c_2$} (y2)
         edge node [above] {$c_3$} (y3)
         edge (e)
         edge node [right] {$c_\ell$} (yn)
    (xR) edge node [left] {$b_1$} (z1)
         edge node [above] {$b_2$} (z2)
         edge node [above] {$b_3$} (z3)
         edge (f)
         edge node [right] {$b_r$} (zm);
\end{tikzpicture}
\end{center}

Throughout this section, let $P=\mathsf{k}[x_R,x_L,y_1,\dots ,y_r,z_1,\dots ,z_{\ell}]$ be a standard graded polynomial ring over a field $\mathsf{k}$.  We begin by giving a linear quotient ordering on the edge ideal $I(\mathcal{W}_{r,\ell})$. 

\begin{prop}\label{whisker}
The ideal $I(\mathcal{W}_{r,\ell})$ has linear quotients with respect to the following ordering:
\begin{align*}
    I(\mathcal{W}_{r,\ell})=(a,b_1,b_2,\dots,b_r,c_1,c_2,\dots,c_{\ell}),
\end{align*}
where $a=\{x_R,x_L\}$, $b_i=\{x_R,y_i\}$ for $i\in\{1,2,...,r\}$, and $c_j=\{x_L,z_j\}$ for $j\in\{1,2,...,\ell\}$.
\end{prop}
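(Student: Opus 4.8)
The plan is to verify the linear quotient condition directly, by computing each successive colon ideal and checking that it is generated by a subset of the variables of $P$. First I would record that $a,b_1,\dots,b_r,c_1,\dots,c_\ell$ are pairwise distinct squarefree monomials of degree two — they are exactly the distinct edges of the simple graph $\mathcal{W}_{r,\ell}$ — so none of them divides another and the list is in fact a minimal set of generators of $I(\mathcal{W}_{r,\ell})$. Then I would invoke the standard description of colon ideals underlying Lemma \ref{HWfact}: for a monomial ideal, $((n_1,\dots,n_{t-1}):(n_t))$ is generated by the monomials $n_h/\mathrm{gcd}(n_h,n_t)$ for $h=1,\dots,t-1$. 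So it suffices to compute these quotients of monomials at each stage of the ordering.

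The computation then splits into three families of colons according to the type of the generator being adjoined. For a generator $b_i$, the preceding generators are $a$ and $b_1,\dots,b_{i-1}$; since $\mathrm{gcd}(a,b_i)=\mathrm{gcd}(x_Rx_L,x_Ry_i)=x_R$ and $\mathrm{gcd}(b_t,b_i)=\mathrm{gcd}(x_Ry_t,x_Ry_i)=x_R$, this yields
\[
((a,b_1,\dots,b_{i-1}):(b_i))=(x_L,\,y_1,\dots,y_{i-1}).
\]
For the generator $c_1$, the preceding generators are $a,b_1,\dots,b_r$; here $\mathrm{gcd}(a,c_1)=\mathrm{gcd}(x_Rx_L,x_Lz_1)=x_L$ produces the quotient $x_R$, while $\mathrm{gcd}(b_t,c_1)=\mathrm{gcd}(x_Ry_t,x_Lz_1)=1$ produces the quotient $x_Ry_t$, which lies in $(x_R)$; hence $((a,b_1,\dots,b_r):(c_1))=(x_R)$. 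Finally, for $c_j$ with $2\le j\le\ell$, the preceding generators are $a,b_1,\dots,b_r,c_1,\dots,c_{j-1}$; the contributions from $a$ and the $b_t$ are as above, and $\mathrm{gcd}(c_s,c_j)=\mathrm{gcd}(x_Lz_s,x_Lz_j)=x_L$ contributes the quotient $z_s$, so that
\[
((a,b_1,\dots,b_r,c_1,\dots,c_{j-1}):(c_j))=(x_R,\,z_1,\dots,z_{j-1}).
\]
Since every colon ideal appearing in the ordering is generated by a subset of the variables, this would complete the proof.

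I do not expect a genuine obstacle here: the statement is essentially a bookkeeping exercise. The only point needing a moment's care is identifying which gcds are trivial, and this is immediate once one uses that $x_L,x_R,y_1,\dots,y_r,z_1,\dots,z_\ell$ are all distinct variables, so that $b_t=x_Ry_t$ and $c_s=x_Lz_s$ are coprime for every $t,s$, and that the $b_t$-quotients collapse into $(x_R)$ at the stage where the $c_j$'s begin. One could alternatively carry out the whole verification through Lemma \ref{HWfact}, exhibiting for each pair $j<i$ of positions the appropriate earlier generator, but the direct colon computation above seems the cleanest route.
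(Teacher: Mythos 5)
Your proof is correct and follows essentially the same route as the paper: a direct computation of the successive colon ideals, landing on exactly the equalities $((a,b_1,\dots,b_{i-1}):(b_i))=(x_L,y_1,\dots,y_{i-1})$ and $((a,b_1,\dots,b_r,c_1,\dots,c_{j-1}):(c_j))=(x_R,z_1,\dots,z_{j-1})$ that the paper records. The only difference is that you spell out the gcd computations and note explicitly that the $b_t$-contributions are absorbed into $(x_R)$, which the paper leaves implicit.
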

\begin{proof}
   This follows directly from the following equalities for $i\in\{1,2,...,r\}$ and $j\in\{1,2,...,\ell\}$:
   \begin{align*}
       ((a,b_1,\dots,b_{i-1}):(b_i))&=\{x_L,y_1,\dots,y_{i-1}\}\\ ((a,b_1,\dots,b_r,c_1,\dots,c_{j-1}):(c_j))&=\{x_R,z_1,\dots,z_{j-1}\}.
   \end{align*}  
\end{proof}

Thus by Theorem \ref{mainthm}, $I(\mathcal{W}_{r,\ell})^k$ admits linear quotients for all $k\in\mathbb{N}$ under the revlex ordering $\widetilde{\mathcal{R}(\mathcal{W}_{r,\ell})^{k}}$
determined by the linear quotient ordering of $I(\mathcal{W}_{r,\ell})$ given in Proposition \ref{whisker}; see Construction \ref{ordering}.  We will adhere to this ordering throughout the remainder of this section.  Now we prove a general lemma that when $G$ is a tree, there are no repetitions in ${\mathcal{R}(G)^{k}}$; in particular, we have the equality $\widetilde{{\mathcal{R}(\mathcal{W}_{r,\ell})^{k}}}={\mathcal{R}(\mathcal{W}_{r,\ell})^{k}}$ since $\mathcal{W}_{r,\ell}$ is a tree. 

\begin{lemma}\label{tree}
Let $T$ be a tree on vertices $x_1,\dots,x_n$ and let $I(T)=(m_1, \dotsc, m_s)$ be its edge ideal in the standard graded polynomial ring $\mathsf{k}[x_1,\dots,x_n]$.  Then for all $k\in\mathbb{N}$, each generator $M \in I(T)^k$ can be expressed uniquely as $M = m_1^{\alpha_1} \dotsc m_s^{\alpha_s}$, for some list of nonnegative integers $\alpha=(\alpha_1, \dotsc, \alpha_s)$.
\end{lemma}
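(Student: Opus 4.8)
The plan is to prove uniqueness of the edge decomposition by exploiting the fact that a tree has no cycles, so that the "ambiguity" illustrated in Example \ref{cycle example} cannot arise. The key observation is that two distinct edge decompositions of the same monomial would force a closed walk in the tree, contradicting acyclicity.

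\textbf{Approach.} Suppose, for contradiction, that some generator $M$ of $I(T)^k$ admits two distinct edge decompositions $M = m_1^{\alpha_1}\dotsc m_s^{\alpha_s} = m_1^{\gamma_1}\dotsc m_s^{\gamma_s}$ with $\alpha \neq \gamma$. First I would reduce to a ``cancellation-free'' situation: since the two multisets of edges are distinct but have equal size $k$, after removing the edges common to both (i.e., replacing $\alpha_i, \gamma_i$ by $\alpha_i - \min(\alpha_i,\gamma_i)$ and $\gamma_i - \min(\alpha_i,\gamma_i)$) we obtain two nonempty edge-multisets $\mathcal{A}$ and $\mathcal{C}$, using disjoint sets of edges, such that the product of the edges in $\mathcal{A}$ equals the product of the edges in $\mathcal{C}$ as monomials in $P$; call this common monomial $N = \prod_{j} x_j^{d_j}$. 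Here $\mathcal{A}$ consists of edges $m_i$ with $\alpha_i > \gamma_i$ (with multiplicity $\alpha_i - \gamma_i$), and $\mathcal{C}$ of edges with $\gamma_i > \alpha_i$; crucially, no edge appears in both.

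\textbf{The combinatorial core.} Think of $\mathcal{A}$ and $\mathcal{C}$ as two subgraphs (with edge multiplicities) of $T$ on the same vertex support and with the same vertex-degree sequence: for each vertex $x_j$, the number of edge-slots in $\mathcal{A}$ incident to $x_j$ equals $d_j$, which equals the corresponding count for $\mathcal{C}$. Now pick any edge $e = \{x_u, x_v\}$ in $\mathcal{A}$. Since $d_v \geq 1$ but $e \notin \mathcal{C}$, some \emph{other} edge $e'$ of $\mathcal{C}$ is incident to $x_v$; then since $e' \notin \mathcal{A}$ and the endpoint of $e'$ other than $x_v$ still has positive degree coming from $\mathcal{A}$, we can continue, alternating between $\mathcal{A}$-edges and $\mathcal{C}$-edges. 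Because $T$ is finite, this walk must eventually revisit a vertex, producing a closed walk in $T$ that alternates between edges of $\mathcal{A}$ and edges of $\mathcal{C}$ and hence traverses at least two distinct edges of $T$ — that is, a cycle in $T$. This contradicts the hypothesis that $T$ is a tree. (One must be a little careful that the alternating walk does not immediately backtrack along the same edge of $T$; this is guaranteed precisely because consecutive edges of the walk lie in $\mathcal{A}$ and $\mathcal{C}$ respectively, which are edge-disjoint.)

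\textbf{Main obstacle.} The delicate point is making the alternating-walk argument airtight: ensuring that one can always extend the walk (the degree bookkeeping never gets stuck at a vertex) and that the resulting closed walk genuinely contains a graph-theoretic cycle rather than collapsing to a single back-and-forth edge. A clean way to package this is to observe that $\mathcal{A}$ and $\mathcal{C}$, viewed as nonnegative integer edge-labelings of $T$ with identical vertex-sums, have a difference $\delta = (\alpha - \gamma)$ restricted to the support, which is a nonzero integer ``flow''-like vector on the edges of $T$ that is balanced at every vertex (each vertex sum is zero); but a tree, having no cycles, supports no nonzero balanced edge-vector — this is the standard fact that the incidence matrix of a forest has trivial kernel / that trees have no cycle space. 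I would likely present the proof via this linear-algebra formulation, as it avoids the fiddly walk-extension argument while capturing exactly why acyclicity is what is needed; the alternating-walk picture can be kept as intuition. Either way, once the contradiction with the tree hypothesis is reached, the lemma — and hence the asserted equality $\widetilde{\mathcal{R}(\mathcal{W}_{r,\ell})^k} = \mathcal{R}(\mathcal{W}_{r,\ell})^k$ — follows immediately.
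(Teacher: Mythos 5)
Your proof is correct, but it takes a genuinely different route from the paper's. The paper proceeds by strong induction on the number of vertices: pick a leaf $x_i$ with unique incident edge $m_j$, observe that the full power of $x_i$ in the vertex decomposition of $M$ forces the exponent $\alpha_j$, divide out $m_j^{\alpha_j}$, and recurse on the smaller tree $T \setminus \{x_i\}$. Your argument instead cancels the two hypothetical decompositions against each other and argues by contradiction, either via the alternating-walk detection of a cycle, or — more cleanly, as you note — by observing that the difference vector $\delta = \alpha - \gamma$ lies in the kernel of the (unsigned) vertex–edge incidence matrix of $T$, which is trivial for a forest. Both approaches are valid and comparably short. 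The induction buys a fully self-contained elementary argument that never mentions cycles or linear algebra, and it directly tracks the leaf-stripping structure of a tree; your version buys a conceptual explanation of exactly which graph-theoretic property (acyclicity, together with bipartiteness if one is being pedantic about the unsigned versus signed incidence matrix) is responsible for uniqueness, and makes clearer why the lemma fails for $\mathcal{C}_4$ as in Example \ref{cycle example}. One small caution: the phrase ``trees have no cycle space'' refers to the signed incidence matrix, whereas what you need here is that the \emph{unsigned} incidence matrix of a tree has independent columns; this is true (trees are bipartite, so the two kernels coincide), but if you write it up you should either invoke bipartiteness or just prove the kernel is trivial by the same leaf-stripping induction — at which point your proof and the paper's become essentially identical.
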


\begin{proof}
We proceed by (strong) induction on the number of vertices $n$. For the base case, consider the tree $T$ with two vertices, $x_1$ and $x_2$. Then $I(T)=(x_1x_2)$, and thus the only generator of $I(T)^k = (x_1^k x_2^k)$ can be represented uniquely, as desired. 
    
Now assume towards induction that the claim is true for any tree on fewer than $n$ vertices, and let $T$ be a tree on $n$ vertices. Then $T$ has at least one leaf; call this vertex $x_i$ and denote by $m_j$ the edge to which it is incident. 
Let $M$ be a minimal generator of $I(T)^k$. Then any power of $x_i$ dividing $M$ corresponds to a power of the edge $m_j$. Let $\alpha_j$ be the maximum power of $m_j$ dividing $M$ and let
    \begin{align*}
        M'=\frac{M}{m_j^{\alpha_j}}.
    \end{align*}
Then $M'$ is a generator of $I(T')^{k - \alpha_j}$, where $T'$ is the tree $T \setminus \{x_i \}$. By induction there is a unique list of nonnegative integers
     $(\alpha_1,\dotsc,\alpha_{j-1},\alpha_{j+1},\dotsc,\alpha_{s})$  such that $M'=m_1^{\alpha_1} \dotsc\, m_{j-1}^{\alpha_{j-1}}m_{j+1}^{\alpha_{j+1}}\dotsc\, m_s^{\alpha_s}$.  Now $M$ is expressed uniquely as $M = m_1^{\alpha_1} \dotsc\, m_s^{\alpha_s}$.
\end{proof}
 
Still adhering to our linear quotient ordering ${\mathcal{R}(\mathcal{W}_{r,\ell})^{k}}$ on $I(\mathcal{W}_{r,\ell})^k$, we now establish some additional notation to be used throughout the remainder of the section.  As in previous sections, we use the notation for ideal quotients introduced in Notation \ref{quotientnotation}, although in this section we write $Q(M)$ to denote the ideal quotient $Q^{\mathcal{R}(G)^k}(M)$ corresponding to the monomial $M$ in $\mathcal{R}(G)^k$ as this is the only ordering we will use throughout the section.  We also refer to edge and vertex decompositions of the generators of $I(\mathcal{W}_{r,\ell})^k$ as introduced in Notation \ref{decomp} throughout this section.  Note however in this section, by Lemma \ref{tree},
not only the vertex decomposition but also the edge decomposition of a generator of $I(\mathcal{W}_{r,\ell})^k$ is unique. 
 Thus there is no distinction between an edge and a formal edge. 
 Furthermore we introduce the following notation.
  
\begin{notat}\label{BCnot}
For any generator $M=a^{\alpha}b_1^{\beta_1}...b_r^{\beta_r}c_1^{\gamma_1}...c_\ell^{\gamma_\ell}$ of $I(\mathcal{W}_{r,\ell})^{k}$, we define integers
\begin{align*}
   B(M)=\begin{cases}
   \text{max}\{p:\beta_p>0\}, & \{p:\beta_p>0\}\neq\emptyset \\
0, & \text{otherwise}
   \end{cases}
   \quad\quad
    C(M)=\begin{cases}
   \text{max}\{p:\gamma_p>0\}, & \{p:\gamma_p>0\}\neq\emptyset \\
0, & \text{otherwise}
   \end{cases}.
\end{align*}
\end{notat}

  
To obtain formulas for the projective dimension and Betti numbers of $I(\mathcal{W}_{r,\ell})^k$, we need the following technical lemma which describes, for a given generator $M$ of $I(\mathcal{W}_{r,\ell})^k$, when each variable belongs to the corresponding ideal quotient $Q(M)$.

\begin{lemma}\label{whiskerquotients}
Adopt Notations \ref{quotientnotation} and \ref{BCnot}.  Fix an integer $k\geq 1$, and let $M=a^{\alpha}b_1^{\beta_1}...b_r^{\beta_r}c_1^{\gamma_1}...c_\ell^{\gamma_\ell}$ be a generator of $I(\mathcal{W}_{r,\ell})^{k}$.  The following statements hold:
\begin{itemize}
    \item[(1)]  For all $1\leq j\leq r$, $y_j\in Q(M)$ \,if and only if\, $B(M)>j$;
    \item[(2)]  For all $1\leq j\leq \ell$, $z_j\in Q(M)$ \,if and only if\, $C(M)>j$;
    \item[(3)]  $x_R\in Q(M)$ if and only if $C(M)>0$;
       \item[(4)]  $x_L\in Q(M)$ if and only if $B(M)>0$;
\end{itemize}

\end{lemma}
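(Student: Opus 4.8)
The plan is to analyze the ideal quotient $Q(M)$ directly via Lemma~\ref{HWfact}, exploiting the very explicit structure of the revlex ordering $\mathcal{R}(\mathcal{W}_{r,\ell})^k$ together with the linear quotient ordering $a, b_1, \dots, b_r, c_1, \dots, c_\ell$ of $I(\mathcal{W}_{r,\ell})$ from Proposition~\ref{whisker}. Recall that by Lemma~\ref{tree} each generator has a \emph{unique} edge decomposition, so $B(M)$ and $C(M)$ are well defined and the ``edge $m_i$ divides $M$'' relation is unambiguous. The key observation driving all four cases is: a variable $x$ lies in $Q(M)$ iff some generator $M'$ preceding $M$ in $\mathcal{R}(\mathcal{W}_{r,\ell})^k$ has $M'/\gcd(M',M) = x$, equivalently $M' = \tfrac{x}{x'}M$ for some variable $x'$ dividing $M$ with $x'\neq x$ (up to the degree bookkeeping), and such an $M'$ precedes $M$ precisely when its edge-exponent vector is revlex-smaller. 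So for each variable I will try to write down the ``cheapest'' candidate $M'$ and check when it both (i) is a genuine generator, i.e.\ lies in $I(\mathcal{W}_{r,\ell})^k$ with the claimed edge decomposition, and (ii) precedes $M$.

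For part~(1), $y_j \in Q(M)$: to introduce $y_j$ I must replace some variable of $M$ by $y_j$; since $y_j$ is incident only to the edge $b_j = x_R y_j$, the natural move is to take a formal edge of $M$ of the form $b_p$ with $p$ as large as possible (namely $p = B(M)$, provided $B(M)>0$) and replace it by $b_j$, i.e.\ $M' = \tfrac{b_j}{b_p}M$, giving $M'/\gcd(M',M) = y_j$ (after noting $x_R$ is common to both $b_j$ and $b_p$). This $M'$ precedes $M$ iff $j < p = B(M)$, which gives the ``if'' direction for $B(M) > j$. For the converse I must check no \emph{other} substitution produces $y_j$ as the single quotient variable and still precedes $M$: any generator $M'$ with $M'/\gcd(M',M) = y_j$ must satisfy $\gcd(M',M) = M/x'$ for a single variable $x'\mid M$, so $M' = \tfrac{y_j}{x'}M$; since the only edge containing $y_j$ is $b_j$, unique-decomposition forces $x'$ to be a vertex incident to an edge of $M$ that gets ``swapped into'' $b_j$, and a short case check on which edge of $M$ is altered shows the altered edge must be some $b_p$ with $p = B(M)$ (altering $a$, a $c$-edge, or a lower $b$-edge either changes two quotient variables or produces a generator not preceding $M$). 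Hence $y_j\in Q(M)$ forces $j < B(M)$. Part~(2) is entirely symmetric, swapping the roles of $(b,y,x_R)$ with $(c,z,x_L)$ and $B$ with $C$; the one asymmetry to keep track of is that the edge $a = x_Rx_L$ sits \emph{before} all the $b$'s and $c$'s in the ordering, but since we never swap into $a$ this does not affect (1) or (2).

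For part~(3), $x_R \in Q(M)$: $x_R$ is incident to $a$ and to every $b_i$. To produce $x_R$ as the quotient variable I replace a $c$-edge of $M$, say $c_p$ with $p = C(M)$, by $a$: then $M' = \tfrac{a}{c_p}M$ has $M'/\gcd(M',M) = x_R$ (using that $x_L$ is common to $c_p$ and $a$), and since $a$ precedes all $c$-edges, $M'$ precedes $M$ whenever $M$ has any $c$-edge at all, i.e.\ whenever $C(M) > 0$. Conversely, if $C(M) = 0$ then $M = a^\alpha b_1^{\beta_1}\cdots b_r^{\beta_r}$ involves only the variables $x_L, x_R, y_1,\dots,y_r$, and $x_L$ appears only via $a$; one checks every preceding generator $M'$ has a quotient $M'/\gcd(M',M)$ that is either not a single variable or not equal to $x_R$ (the obstruction being that lowering the $a$-exponent forces $x_L$ into the quotient alongside $x_R$, and no $b$-to-$b$ or $b$-to-$a$ swap yields exactly $x_R$), so $x_R\notin Q(M)$. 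Part~(4) for $x_L$ is symmetric but with the extra subtlety that $a$ precedes the $b$-edges: to get $x_L$ into $Q(M)$ I swap a $b$-edge $b_p$, $p = B(M)$, for $a$, and $a$ precedes $b_p$, so this works iff $B(M) > 0$; the converse argument mirrors (3).

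The main obstacle will be the converse (``only if'') directions, specifically ruling out \emph{all} alternative preceding generators that might sneak a given variable into $Q(M)$ through a less obvious substitution; the safeguard is that each generator has a unique edge decomposition (Lemma~\ref{tree}) and the quotient $M'/\gcd(M',M)$ being a single variable pins down $M'$ to be $\tfrac{x}{x'}M$ for one variable $x'\mid M$, so the case analysis is finite and is organized by which edge of $M$ is modified when passing to $M'$ — for each choice one computes both the resulting quotient variable and whether the revlex comparison goes the right way.
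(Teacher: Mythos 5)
Your overall plan is the same as the paper's: use Lemma~\ref{HWfact} together with the unique edge decomposition guaranteed by Lemma~\ref{tree} to pin down, for any single-variable quotient $M'/\gcd(M',M)=x$, that $M' = (x/x')M$ for a unique variable $x'$ dividing $M$, then run a case analysis on $x'$. The ``if'' directions are exactly the paper's constructions. For the ``only if'' directions the paper makes the ``short case check'' you invoke precise by introducing an explicit linear constraint on the vertex exponents (Constraint~2: the exponent of $x_R$ minus the total $y$-degree equals the exponent of $x_L$ minus the total $z$-degree, both being $\alpha$); applying it to both $M$ and $M'$ immediately rules out all but one or two choices of $x'$, and the revlex comparison eliminates the rest. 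You gesture at this via ``unique-decomposition forces'' and ``changes two quotient variables or produces a generator not preceding $M$'', which is the right idea but leaves the actual mechanism unstated.

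One small inaccuracy in part~(1): you claim the altered edge ``must be some $b_p$ with $p = B(M)$,'' but this is stronger than what holds and stronger than what you need. The correct statement is that the altered edge is some $b_w$ with $w > j$ and $\beta_w > 0$ (any such $w$, not only $w = B(M)$), from which $B(M) \geq w > j$ follows. The same comment applies to the other parts. This doesn't break the argument, but as written it would not survive scrutiny. Filling in the Constraint~2 bookkeeping would both fix this and make the ``finite case analysis'' you describe genuinely short.
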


\begin{proof}
Consider
the unique vertex decomposition
\begin{align*}
    M={y_1^{d_1}}\cdot\dots\cdot{y_r^{d_r}}{z_1^{d_{r+1}}}\cdot\dots\cdot{z_\ell^{d_{r+\ell}}}x_R^{d_{r+\ell+1}}x_L^{d_{r+\ell+2}}.
\end{align*}
One can check from the structure of $\mathcal{W}_{r,\ell}$ that the following constraints hold:
\begin{itemize}
    \item  Constraint 1: $\begin{cases}
        d_i>0\,\iff\,\beta_i>0, & \text{for}\,\, 1\leq i\leq r \\
        d_i>0\,\iff\,\gamma_{i-r}>0, & \text{for}\,\, r+1\leq i\leq r+\ell
    \end{cases}$
    \item Constraint 2:\,\, $\displaystyle{d_{r+\ell+1}-\sum_{s=1}^{r} d_s = d_{r+\ell+2}-\sum_{s=1}^{\ell} d_{r+s}}$. 
\end{itemize}
Now we are ready to prove the desired statements.  For ease of notation, in the following arguments we abbreviate $B(M)$ and $C(M)$ to $B$ and $C$, respectively.  Note that the proofs of (2) and (4) are entirely similar to those of (1) and (3), respectively, so we omit those here.\vspace{0.15cm}

\noindent\textbf{(1)} Fix $1\leq j\leq r$ and assume that $B>j$. Then $b_B={x_Ry_B}$ is an edge of $M$, and thus $M\cdot\frac{b_j}{b_B}$ precedes $M$ in the linear quotient ordering. Thus we have:
\begin{align*}
   y_j=\frac{M\cdot\frac{x_Ry_j}{x_Ry_B}}{\mathrm{gcd}\left(M\cdot\frac{x_Ry_j}{x_Ry_B}, M\right)}=\frac{M\cdot\frac{b_j}{b_B}}{\mathrm{gcd}\left(M\cdot\frac{b_j}{b_B}, M\right)}\in Q(M). 
\end{align*}

Conversely if $y_j\in Q(M)$, then $(M'):(M)=(y_j)$ for some $M'$ preceding $M$; that is: 
\begin{align}\label{whiskervertexdecomp}
M'={y_1^{e_1}}\cdot\dots\cdot{y_r^{e_r}}{z_1^{e_{r+1}}}\cdot\dots\cdot{z_\ell^{e_{r+\ell}}}x_R^{e_{r+\ell+1}}x_L^{e_{r+\ell+2}}, 
\end{align}
where for some $w\neq j$, we have equalities $d_n=e_n$ for all $n\in\{1,\dots,r+\ell+2\}{\setminus\{j,w\}}$, $e_j=d_j+1$, and $e_w=d_w-1$.  By double application of Constraint 2, we have the following equalities
 \begin{align}\label{whiskerconstraint2}
     d_{r+\ell+1}-\sum_{s=1}^{r} d_s &= d_{r+\ell+2}-\sum_{s=1}^{\ell} d_{r+s}\nonumber \\ e_{r+\ell+1}-\sum_{s=1}^{r} e_s &= e_{r+\ell+2}-\sum_{s=1}^{\ell} e_{r+s}.
 \end{align}
 Since $e_j=d_j+1$, $e_w=d_w-1$, and $1\leq j\leq r$, it follows from these equations that $1\leq w\leq r$ or $w=r+\ell+2$.  We consider these two cases separately.
 
 If $1\leq w\leq r$ then we have the following equality 
 \begin{align*}
     M'\cdot{x_Ry_w}=M\cdot{x_Ry_j}.
 \end{align*}
Now by uniqueness of edge decompositions of generators of $I(\mathcal{W}_{r,\ell})^{k+1}$
 and since $M'$ precedes $M$ in the linear quotient ordering of $I(\mathcal{W}_{r,\ell})^k$, we must have that $x_Ry_j$ precedes $x_Ry_w$ in the linear quotient ordering of $I(\mathcal{W}_{r,\ell})$. Thus, $w>j$. Since $e_w=d_w-1$, we have $d_w>0$, and thus by Constraint 1 we have $\beta_w>0$. Therefore $B\geq{w}>j$, as desired. 
 
 On the other hand, if $w=r+\ell+2$, then we have the following equality
\begin{align*}
    M'\cdot{x_Rx_L}=M\cdot{x_Ry_j},
\end{align*}
but this is impossible because $M'$ precedes $M$ in the linear quotient ordering of $I(\mathcal{W}_{r,\ell})^k$ and $x_Rx_L$ precedes $x_Ry_j$ in the linear quotient ordering of $I(\mathcal{W}_{r,\ell})$. \vspace{0.15cm}
\noindent \textbf{(3)} 
Assume that $C>0$. Then $c_C={x_Lz_C}$ is an edge of $M$, and thus $M\cdot\frac{a}{c_C}$ precedes $M$, yielding:
\begin{align*}
x_R=\frac{M\cdot\frac{x_Rx_L}{x_Lz_C}}{\mathrm{gcd}\left(M\cdot\frac{x_Rx_L}{x_Lz_C}, M\right)}=\frac{M\cdot\frac{a}{c_C}}{\mathrm{gcd}\left(M\cdot\frac{a}{c_C}, M\right)}\in Q(M).  
\end{align*}

Conversely, if $x_R\in Q(M)$ then $(M'):(M)=(x_R)$ for some $M'$ preceding $M$. Thus $M'$ has the vertex decomposition in \eqref{whiskervertexdecomp}, 
where for some $w\neq r+\ell+1$, we have equalities $d_n=e_n$ for all $n\in\{1,\dots,r+\ell+2\}{\setminus\{r+\ell+1,w\}}$, $e_{r+\ell+1}=d_{r+\ell+1}+1$, and $e_w=d_w-1$.  Again by double application of Constraint 2, we have the equations in \eqref{whiskerconstraint2}.
Since $e_{r+\ell+1}=d_{r+\ell+1}+1$ and $e_w=d_w-1$, it follows from these equations that $r+1\leq w\leq r+\ell$. Since $e_w=d_w-1$, we have $d_w>0$, and thus by Constraint 1 we have $\gamma_{w-r}>0$. Therefore, $C\geq{w-r}>0$, as desired.
\end{proof}

Now we are ready to obtain formulas for the projective dimension and the Betti numbers of the powers of the edge ideal of the whisker graph $\mathcal{W}_{r,\ell}$. 

\begin{thm}\label{pd&betti-whisker}
Let $P=\mathsf{k}[x_R,x_L,y_1,\dots ,y_r,z_1,\dots ,z_{\ell}]$ be a standard graded polynomial ring with $\mathsf{k}$ a field.  Then the projective dimension of the ideal $I(\mathcal{W}_{r,\ell})^k$ is given by
\begin{align*}
    \mathrm{pd}(I(\mathcal{W}_{r,\ell})^k)=\begin{cases} r+\ell & k\geq 2 \\
    \mathrm{max}(r,\ell) & k=1
    \end{cases}
\end{align*}
and for $k\geq 1$ its Betti numbers are given by
\begin{displaymath}
\beta_i(I(\mathcal{W}_{r,\ell})^k)=\begin{dcases} {k-2+i\choose i}{r+\ell+k\choose k+i}+{k-2+i\choose i-1}\left[{r+k\choose k+i}+{\ell+k\choose k+i}\right] & i\geq 1 \\
    {r+\ell+k\choose k} & i=0.
    \end{dcases}
\end{displaymath}
\end{thm}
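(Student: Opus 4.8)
The plan is to apply Proposition~\ref{pd&betti} to the linear quotient ordering $\widetilde{\mathcal{R}(\mathcal{W}_{r,\ell})^{k}}$, which by Lemma~\ref{tree} and Theorem~\ref{mainthm} equals $\mathcal{R}(\mathcal{W}_{r,\ell})^{k}$ and is a genuine linear quotient ordering. For a generator $M$ of $I(\mathcal{W}_{r,\ell})^k$, write $\nu(M)$ for the minimal number of generators of $Q(M)$, and recall the quantities $B(M),C(M)$ of Notation~\ref{BCnot}. The first step is to show that $\nu(M)=B(M)+C(M)$. Since $\mathcal{R}(\mathcal{W}_{r,\ell})^{k}$ has linear quotients, $Q(M)$ is generated by the set of variables it contains, and Lemma~\ref{whiskerquotients} identifies this set precisely: it is $\{y_1,\dots,y_{B(M)-1}\}$ together with $x_L$ when $B(M)>0$, and $\{z_1,\dots,z_{C(M)-1}\}$ together with $x_R$ when $C(M)>0$. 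When $B(M)=b\geq 1$ the variables $y_1,\dots,y_{b-1}$ together with $x_L$ contribute $b$ generators, and they contribute none when $b=0$; symmetrically for the $z$-side. Adding the two contributions gives $\nu(M)=B(M)+C(M)$.

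The projective dimension formula now follows from Proposition~\ref{pd&betti}: $\mathrm{pd}(I(\mathcal{W}_{r,\ell})^k)=\max_M(B(M)+C(M))$, where $M$ runs over the generators. Since $B(M)\leq r$ and $C(M)\leq\ell$ always, and since for $k\geq 2$ the generator $M=a^{k-2}b_rc_{\ell}$ has $B(M)=r$ and $C(M)=\ell$, we get $\mathrm{pd}=r+\ell$ when $k\geq 2$. For $k=1$ the generators are $a,b_1,\dots,b_r,c_1,\dots,c_{\ell}$ with $\nu(a)=0$ and $\nu(b_j)=\nu(c_j)=j$, giving $\mathrm{pd}=\max(r,\ell)$.

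For the Betti numbers, Proposition~\ref{pd&betti} gives $\beta_i(I(\mathcal{W}_{r,\ell})^k)=\sum_M\binom{B(M)+C(M)}{i}$, and I would group the generators according to the value of $(B(M),C(M))$. By Lemma~\ref{tree} the generators of $I(\mathcal{W}_{r,\ell})^k$ correspond bijectively to tuples of nonnegative integers $(\alpha,\beta_1,\dots,\beta_r,\gamma_1,\dots,\gamma_{\ell})$ summing to $k$, with $B(M)$ (resp. $C(M)$) the largest index of a positive $\beta$ (resp. $\gamma$), or $0$. A stars-and-bars count then shows that the number of generators with $(B(M),C(M))=(b,c)$ is $1$ when $b=c=0$, is $\binom{k+b-1}{b}$ when $b\geq 1,\ c=0$, is $\binom{k+c-1}{c}$ when $b=0,\ c\geq 1$, and is $\binom{k+b+c-2}{b+c}$ when $b,c\geq 1$. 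For $i=0$ this gives $\beta_0=\binom{r+\ell+k}{k}$, the total number of minimal generators. For $i\geq 1$ the $b=c=0$ term contributes nothing, and $\beta_i$ becomes $\sum_{b\geq 1}\binom{k+b-1}{b}\binom{b}{i}+\sum_{c\geq 1}\binom{k+c-1}{c}\binom{c}{i}+\sum_{b,c\geq 1}\binom{k+b+c-2}{b+c}\binom{b+c}{i}$.

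Evaluating these three sums in closed form is the last step, and it is the one I expect to require the most care, being a somewhat intricate manipulation of binomial coefficients. The key tool is the subset-of-a-subset identity $\binom{N}{a}\binom{N-a}{i}=\binom{a+i}{a}\binom{N}{a+i}$. Writing $\binom{k+b+c-2}{b+c}=\binom{b+c+k-2}{k-2}$ and applying this with $N=b+c+k-2$ and $a=k-2$ factors $\binom{k-2+i}{i}$ out of the double sum and leaves $\sum_{b,c\geq 1}\binom{b+c+k-2}{k+i-2}$, which collapses via two applications of the hockey-stick identity to $\binom{r+\ell+k}{k+i}-\binom{r+k}{k+i}-\binom{\ell+k}{k+i}$. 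Applying the same identity with $N=k+b-1$ and $a=k-1$, then the hockey-stick identity, evaluates the single sums to $\binom{k-1+i}{i}\binom{k+r}{k+i}$ and $\binom{k-1+i}{i}\binom{k+\ell}{k+i}$. Summing the three pieces and using Pascal's rule $\binom{k-1+i}{i}-\binom{k-2+i}{i}=\binom{k-2+i}{i-1}$ on the coefficients of $\binom{k+r}{k+i}$ and $\binom{k+\ell}{k+i}$ produces exactly the stated formula for $i\geq 1$.
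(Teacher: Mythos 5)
Your proposal is correct and follows essentially the same route as the paper's proof: apply Proposition~\ref{pd&betti} to the linear quotient ordering from Theorem~\ref{mainthm}, use Lemma~\ref{whiskerquotients} to show $\nu(Q(M))=B(M)+C(M)$, count generators by $(B,C)$-type via stars and bars to get the same $f(B,C)$ values, and then evaluate the resulting sums with the same subset-of-a-subset identity (stated in an equivalent form), the hockey-stick identity, and Pascal's rule. The only cosmetic differences are that you separate out the $B=C=0$ term explicitly (the paper implicitly drops it since $\binom{0}{i}=0$ for $i\geq1$) and that you spell out the $k=1$ projective-dimension case by listing $\nu(b_j)=\nu(c_j)=j$ directly.
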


\begin{proof}
By Theorem \ref{mainthm} and Proposition \ref{whisker}, the powers of $I(\mathcal{W}_{r,\ell})$ have linear quotients with respect to the revlex ordering
$\mathcal{R}({\mathcal{W}_{r,\ell})^k}=\left(M_1,M_2,...,M_{\binom{r+\ell+k}{k}}\right)$
in Construction \ref{ordering}. Thus by Proposition \ref{pd&betti}, to calculate projective dimension and Betti numbers, it suffices to calculate the minimal number of generators $\nu(Q(M_j))$ of the ideal quotient $Q(M_j)$, for each $j=1,\dots,{r+\ell+k\choose k}$.
 
By Lemma \ref{whiskerquotients}, we have the following equality for each $j=1,\dots,{r+\ell+k\choose k}$,
\begin{align}\label{countgens}
    \nu(Q(M_j))=B(M_j)+C(M_j),
\end{align}
where $B(M_j)$ and $C(M_j)$ are defined as in Notation \ref{BCnot}.  Thus $\nu(Q(M_j))$ is maximized whenever $B(M_j)$ and $C(M_j)$ are as large as possible.  For $k\geq 2$, this is achieved when $B(M_j)=r$ and $C(M_j)=\ell$, and thus by Proposition \ref{pd&betti} the projective dimension is $r+\ell$. For $k=1$, it is impossible for both $B(M_j)$ and $C(M_j)$ to be positive, so in this case the projective dimension is $\mathrm{max}(r,\ell)$.  

Now we calculate the Betti numbers $\beta_i:=\beta_i(I(\mathcal{W}_{r,\ell})^k)$.  By Proposition \ref{pd&betti} we have:
\begin{align*}
\beta_0=\sum_{j=1}^{r+\ell+k\choose k}{\nu(Q(M_j))\choose 0}=\sum_{j=1}^{r+\ell+k\choose k}1={r+\ell+k\choose k}.  
\end{align*}

Next, we calculate $\beta_i$ for $i>0$. For fixed integers $B,C\geq 0$, define 
\begin{align}\label{ffunction}
    f(B,C):=\left|\left\{M\in{\mathcal{R}({\mathcal{W}_{r,\ell})^k}}\,\Big|\,B(M)=B\,\,\text{and}\,\,C(M)=C\right\}\right|.
\end{align}
Then by Proposition \ref{pd&betti} and \eqref{countgens} we have the equality 
\begin{align}\label{bettiformulaBC}
\beta_i =\sum_{B,C\geq 1}f(B,C){B+C\choose i}+\sum_{\substack{B\geq 1\\C=0}}f(B,0){B \choose i}+\sum_{\substack{B=0\\C\geq 1}}f(0,C){C\choose i}.
\end{align}

Observe that $f(B,0)$ is the number of sequences of length $B+1$ of nonnegative integers which sum to $k$ and whose last entry is strictly positive; that is,
\begin{align}\label{countciszero}
    f(B,0)={B+k\choose k}-{B+k-1\choose k}={B+k-1\choose k-1}={B+k-1\choose B},
\end{align}
where the second equality follows from Pascal's Identity.  Similarly we have
\begin{align}\label{countbiszero}
    f(0,C)={C+k-1\choose C}.
\end{align}
Now for ${B,C\geq 1}$, $f(B,C)$ is the number of sequences of length $B+C+1$ of nonnegative integers which sum to $k$ whose last two entries (corresponding to the exponents of $b_B=x_Ry_B$ and $c_C=x_Lz_C$) are strictly positive. By the inclusion-exclusion principle, we see that
\begin{align}
     f(B,C)&={B+C+k\choose k}-2{B+C+k-1\choose k}+{B+C+k-2\choose k} 
    ={B+C+k-2\choose B+C},\label{counttherest}
\end{align}
where the last equality follows from several applications of Pascal's identity and the symmetry of binomial coefficients.

Substituting \eqref{countciszero}, \eqref{countbiszero}, and \eqref{counttherest} into \eqref{bettiformulaBC} and writing $A:=B+C$, we obtain
\begin{align}\label{bettiformulasimplified}
\beta_i &=\sum_{B,C\geq 1}{A+k-2\choose A}{A\choose i}+\sum_{\substack{B\geq 1\\C=0}}{B+k-1\choose B}{B \choose i}+\sum_{\substack{B=0\\C\geq 1}}{C+k-1\choose C}{C\choose i}\nonumber\\
  &=\sum_{B,C\geq 1}{A+k-2\choose k-2+i}{k-2+i\choose k-2}+\sum_{\substack{B\geq 1\\C=0}}{B+k-1\choose k-1+i}{k-1+i \choose k-1}+\sum_{\substack{B=0\\C\geq 1}}{C+k-1\choose k-1+i}{k-1+i\choose k-1} \nonumber\\
  &={k-2+i\choose k-2}\sum_{B,C\geq 1}{A+k-2\choose k-2+i}+{k-1+i\choose k-1}\left[\sum_{\substack{B\geq 1\\C=0}}{B+k-1\choose k-1+i}+\sum_{\substack{B=0\\C\geq 1}}{C+k-1\choose k-1+i}\right],\nonumber\\
  \,
\end{align}
\vspace*{-1cm}\\
where the second equality follows from the combinatorial identity ${x\choose y}{y\choose z}={x\choose x-y+z}{x-y+z\choose x-y}$.

Now we analyze each sum in the equality above separately.  By the hockey-stick identity, since $B$ runs from 1 to $r$, we have the following equality
\begin{align}\label{hockey1}
    \sum_{\substack{B\geq 1\\C=0}}{B+k-1\choose k-1+i}=\sum_{B=i}^r{B+k-1\choose k-1+i}={r+k\choose k+i}.
\end{align}
Similarly since $C$ runs from 1 to $\ell$, we have the following equality
\begin{align}\label{hockey2}
    \sum_{\substack{B=0\\C\geq 1}}{C+k-1\choose k-1+i}=\sum_{C=i}^{\ell}{C+k-1\choose k-1+i}={\ell+k\choose k+i}.
\end{align}
We use the same identity twice on the remaining sum as follows:
\begin{align}\label{hockey3}
\sum_{B,C\geq 1}{A+k-2\choose k-2+i}&=\sum_{B\geq 1}\sum_{C\geq 1}{B+C+k-2\choose k-2+i} \nonumber=\sum_{B\geq 1}\left[{B+\ell+k-1\choose k-1+i}-{B+k-1\choose k-1+i}\right] \nonumber\\
    &=\left[{r+\ell+k\choose k+i}-{\ell+k\choose k+i}\right]-\left[{r+k\choose k+i}-{k\choose k+i}\right] \nonumber\\
    &={r+\ell+k\choose k+i}-{r+k\choose k+i}-{\ell+k\choose k+i}.
\end{align}

Substituting \eqref{hockey1}, \eqref{hockey2}, and \eqref{hockey3} into \eqref{bettiformulasimplified} and simplifying, we have
\begin{align*}
    \beta_i &={k-2+i\choose k-2}\left[{r+\ell+k\choose k+i}-{r+k\choose k+i}-{\ell+k\choose k+i}\right]+{k-1+i\choose k-1}\left[{r+k\choose k+i}+{\ell+k\choose k+i}\right]\\
&={k-2+i\choose k-2}{r+\ell+k\choose k+i}+\left[{k-1+i\choose k-1}-{k-2+i\choose k-2}\right]
\left[{r+k\choose k+i}+{\ell+k\choose k+i}\right] \\
    &={k-2+i\choose i}{r+\ell+k\choose k+i}+{k-2+i\choose i-1}\left[{r+k\choose k+i}+{\ell+k\choose k+i}\right],
\end{align*}
where the last equality follows from Pascal's Identity and the symmetry of binomial coefficients.  
\end{proof}

This result generalizes the formulas given by Ferra, Murgia, and Olteanu in \cite[Corollary 3.4, Remark 3.5]{FMO12} for so-called \textit{star graphs}; that is, whisker graphs with $\ell=0$.

\section{Powers of the edge ideal of the anticycle}

In this section we provide a linear quotient ordering on the second and higher powers of the edge ideal of the anticycle graph (that is, the complement of the simple cycle), despite the fact that the edge ideal of the anticycle itself does not admit linear quotients. This answers in the affirmative a question of Hoefel and Whieldon; see \cite[Question 5.1]{HW11}.

We begin by establishing notation we will use throughout the section. 
Fix integers $n\geq 5$ and $k\geq 2$ and let $P=\mathsf{k}[x_1,\dots,x_n]$.  Denote by $\mathcal{A}_n$ the anticycle on vertices $x_1,\dots,x_n$ with edge ideal 
\begin{displaymath}
    I(\mathcal{A}_n)=(x_1x_3, x_1x_4,\dots, x_1x_{n-1}, x_2x_4,\dots, x_2x_n,\dots, x_{n-2}x_n).
\end{displaymath}
Note that $I(\mathcal{A}_n)$ is obtained from the edge ideal of the antipath on the same $n$ vertices,
\begin{displaymath}
    I(\mathcal{P}_n)=(x_1x_3, x_1x_4, \dots, x_1x_{n}, x_2x_4,\dots ,x_2x_n,\dots, x_{n-2}x_n)
\end{displaymath}
by simply removing the edge $x_1x_n$.  As in previous sections, we refer to edge and vertex decompositions of the generators of $I(\mathcal{A}_n)$ as introduced in Terminology \ref{decomp} as well as the ideal quotients $Q^{\mathcal{O}}(M)$ of a monomial $M$ under the ordering $\mathcal{O}$ as introduced in Notation \ref{quotientnotation} throughout this section.  Furthermore, for a monomial $M\in P$, we denote by $\mathrm{supp}\,M$ the usual support of a monomial; that is, the set of variables appearing with positive exponent in the vertex decomposition of $M$. 

  
Next we construct what we prove in Theorem \ref{anticycle-theorem} to be a linear quotient ordering on $I(\mathcal{A}_n)^k$.

\begin{constr}\label{anticycle-order}
    Denote by $\mathcal{O}_n^{(k)}$ the following ordering of the minimal generators of $I(\mathcal{A}_n)^k$:

\begin{itemize}
    \item[(1)]  First, order all the generators divisible by $x_n$ according to the lexicographic ordering by: 
    \begin{displaymath}
        x_n>x_2>x_3>\cdots >x_{n-1}>x_1;
    \end{displaymath}
    that is, $x_n^{\alpha_n}x_2^{\alpha_2}x_3^{\alpha_3}\dots x_{n-1}^{\alpha_{n-1}}x_1^{\alpha_1}$ precedes $x_n^{\beta_n}x_2^{\beta_2}x_3^{\beta_3}\dots x_{n-1}^{\beta_{n-1}}x_1^{\beta_1}$ whenever the first nonzero entry in the vector $(\alpha_n,\alpha_2,\alpha_3,\dots,\alpha_{n-1},\alpha_1)-(\beta_n,\beta_2,\beta_3,\dots,\beta_{n-1},\beta_1)$ is positive.
    \item[(2)] Next, order the remaining generators according to the lexicographic ordering by
    \begin{displaymath}
        x_1>x_2>x_3>\cdots >x_{n-1};
    \end{displaymath}
    that is, $x_1^{\alpha_1}x_2^{\alpha_2}\dots x_{n-1}^{\alpha_{n-1}}$ precedes $x_1^{\beta_1}x_2^{\beta_2}\dots x_{n-1}^{\beta_{n-1}}$ whenever the first nonzero entry in the vector $(\alpha_1,\alpha_2,\dots,\alpha_{n-1})-(\beta_1,\beta_2,\dots,\beta_{n-1})$ is positive.
    \item[(3)] Finally, move the generator $(x_1x_{n-1})^k$ immediately after the generator 
    \begin{align*}
        D:=(x_1x_{n-1})^{k-1}(x_2x_{n-1}).
    \end{align*}
    We call $D$ the \textit{distinguished generator}.
\end{itemize}

Note that $\mathcal{O}_n^{(k)}$ is a concatenation of two orderings $F$ and $S$, where $F$ is the sub-ordering containing all generators divisible by $x_n$ as described in (1) and $S$ is the sub-ordering containing all generators not divisible by $x_n$ as described in (2) and (3).
\end{constr}

\begin{ex}
     Let $n=5$ and $k=2$.  The ordering constructed above is $\mathcal{O}_n^{(k)}=(F,S)$, where
     \begin{align*}
     F&=(x_2^2x_5^2,\, x_2x_3x_5^2,\, x_3^2x_5^2,\, x_2^2x_4x_5,\, x_2x_3x_4x_5,\, x_1x_2x_3x_5,\, x_1x_2x_4x_5,\, x_1x_3^2x_5,\, x_1x_3x_4x_5) \\
     S&=(x_1^2x_3^2,\, x_1^2x_3x_4,\, x_1x_2x_3x_4,\, x_1x_2x_4^2,\, x_1^2x_4^2,\, x_2^2x_4^2). 
     \end{align*}
\end{ex}

\begin{remk}
    Note that the concatenation of two linear quotient orderings need not be a linear quotient ordering. For instance, $(x_1x_2, x_2x_3)$ and $(x_3x_4, x_4x_5, x_1x_5)$ are both linear quotient orderings, but their concatenation is not; indeed, the simple cycle $\mathcal{C}_5$ admits no linear quotient ordering.
    
    On the other hand, the concatenation of two orderings being a linear quotient ordering does not imply that the second ordering is itself a linear quotient ordering. For example, observe that $(x_2x_3, x_1x_2, x_3x_4)$ is a linear quotient ordering of $\mathcal{P}_4$, but $(x_1x_2, x_3x_4)$ does not yield linear quotients. 
    
    Surprisingly, however, we show in Theorem \ref{anticycle-theorem} that $F$ and $S$ as defined in Construction \ref{anticycle-order} interact ``nicely" with each other, in that their concatenation is a linear quotient ordering and furthermore they are both themselves linear quotient orderings.
\end{remk}

Next we illustrate why some natural simplifications of $\mathcal{O}_n^{(k)}$ do not yield linear quotient orderings.

\begin{remk}
\textbf{(1)}  The lexicographic ordering of all of the generators by 
\begin{displaymath}
    x_n>x_2>x_3>\cdots >x_{n-1}>x_1
\end{displaymath}
does not yield a linear quotient ordering of $I(\mathcal{A}_n)^k$ because in this case if $M_1=x_nx_1x_3^2(x_1x_3)^{k-2}$ and $M_2=x_n^2x_2^2(x_1x_3)^{k-2}$, there is no generator $M_3$ preceding $M_2$ such that

\begin{displaymath}
    \frac{M_3}{\text{gcd}(M_3, M_2)}\in\{x_1,x_2\dots , x_n\}\quad\text{and}\quad\frac{M_3}{\text{gcd}(M_3, M_2)}\Big|\frac{M_1}{\text{gcd}(M_1, M_2)}.
\end{displaymath} 
This illustrates the importance of our choice in Construction \ref{anticycle-order}(2). \\
\noindent\textbf{(2)}  The lexicographic ordering of the generators divisible by $x_n$ by
\begin{displaymath}
    x_n>x_2>x_3>\cdots >x_{n-1}>x_1
\end{displaymath}
followed by the lexicographic ordering of the remaining generators by
\begin{displaymath}
    x_1>x_2>x_3>\cdots >x_{n-1}
\end{displaymath}
does not yield a linear quotient ordering of $I(\mathcal{A}_n)^k$, because in this case if $M_1=x_2x_{n}(x_1x_{n-1})^{k-1}$ and $M_2=(x_1x_{n-1})^k$, there is no generator $M_3$ preceding $M_2$ such that 
\begin{displaymath}
    \frac{M_3}{\text{gcd}(M_3, M_2)}\in\{x_1,x_2\dots , x_n\}\quad\text{and}\quad\frac{M_3}{\text{gcd}(M_3, M_2)}\Big|\frac{M_1}{\text{gcd}(M_1, M_2)}. 
\end{displaymath}
This illustrates the final move in Construction 
\ref{anticycle-order} (3), which surprisingly requires no further changes.
\end{remk}

\begin{thm}\label{anticycle-theorem}
Let $P=k[x_1,\dots, x_n]$ be a standard graded polynomial ring with $n\geq 5$, and let $I(\mathcal{A}_n)\subseteq P$ be the edge ideal of the anticycle graph $\mathcal{A}_n$ on $n$ vertices.  Then for all integers $k\geq 2$, the ordering $\mathcal{O}_n^{(k)}$ defined in Construction \ref{anticycle-order} is a linear quotient ordering of $I(\mathcal{A}_n)^k$.
\end{thm}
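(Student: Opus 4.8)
The strategy is to verify the criterion of Lemma~\ref{HWfact} directly for the ordering $\mathcal{O}_n^{(k)}$: for every pair $M_1, M_2$ with $M_1$ preceding $M_2$, produce a generator $M_3$ preceding $M_2$ such that $\frac{M_3}{\gcd(M_3,M_2)}$ is a single variable dividing $\frac{M_1}{\gcd(M_1,M_2)}$. Since $\mathcal{O}_n^{(k)}$ is the concatenation of the block $F$ (generators divisible by $x_n$, ordered lexicographically by $x_n > x_2 > \dots > x_{n-1} > x_1$) and the block $S$ (the remaining generators, ordered lexicographically by $x_1 > x_2 > \dots > x_{n-1}$ with the single transposition of $(x_1x_{n-1})^k$), the argument should split into three cases according to whether $M_2$ lies in $F$, or $M_2$ lies in $S$ with $M_1 \in S$, or $M_2$ lies in $S$ with $M_1 \in F$. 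I expect the first two cases to reduce to the standard fact that a lexicographic ordering of the generators of a power of an ideal with linear quotients (here $I(\mathcal{P}_n)$ restricted appropriately, or $I(\mathcal{A}_n)$ itself minus $x_n$-free issues) behaves well — essentially because for each "descent" one can swap a variable in $M_2$ for an earlier variable using an edge of $\mathcal{A}_n$, and lexicographic minimality then forces the witness $M_3$ to precede $M_2$. The delicate points are (i) in block $F$, the generator has a factor of $x_n$, and $x_n$ is only adjacent to $x_2,\dots,x_{n-2}$ in $\mathcal{A}_n$ (not $x_1$ or $x_{n-1}$), so one must be careful that the edge used to build $M_3$ genuinely exists; the lex order $x_n > x_2 > \cdots$ is rigged precisely so that the "largest available" vertex after $x_n$ is $x_2$, which is adjacent to everything except $x_1$ — this is where Construction~\ref{anticycle-order}(1)'s unusual variable order earns its keep; and (ii) the transposed generator $(x_1x_{n-1})^k$ and its new predecessor, the distinguished generator $D = (x_1x_{n-1})^{k-1}(x_2x_{n-1})$, must be checked by hand.

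**The cross-block case.** The genuinely new phenomenon, and the step I expect to be the main obstacle, is the case $M_2 \in S$ and $M_1 \in F$. Here $M_2$ is not divisible by $x_n$ while $M_1$ is, so $x_n \mid \frac{M_1}{\gcd(M_1,M_2)}$, and the natural witness $M_3$ would be a generator, preceding $M_2$, obtained from $M_2$ by trading some variable for $x_n$ via an edge $x_i x_n$ of $\mathcal{A}_n$ (with $2 \le i \le n-2$). Such an $M_3$ automatically lies in $F$, hence precedes all of $S$ and in particular precedes $M_2$ — so the preceding condition is free. The content is to show $\frac{M_3}{\gcd(M_3,M_2)}$ can be taken to be a single variable dividing $\frac{M_1}{\gcd(M_1,M_2)}$: one wants to replace some $x_j$ in the support of $M_2$ (with $x_j \mid \frac{M_1}{\gcd(M_1,M_2)}$ actually this needs care — more precisely one wants the quotient variable to divide the $M_1$-side) by $x_n$, which requires that $M_2 / x_j \cdot x_n$ still be a valid $k$-th power of edges of $\mathcal{A}_n$, i.e.\ that removing $x_j$ from an edge-decomposition of $M_2$ and attaching $x_n$ yields edges of $\mathcal{A}_n$. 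The subtlety is exactly when $M_2 = (x_1 x_{n-1})^k$ or $M_2 = D$, since $x_1$ and $x_{n-1}$ are the two vertices not adjacent to $x_n$; this is why Construction~\ref{anticycle-order}(3) is needed and why the remark preceding the theorem flags that naive simplifications fail. I would handle the generic subcase first (where $M_2$ has a variable $x_j \notin \{x_1, x_{n-1}\}$ in a suitable position) and then treat $(x_1x_{n-1})^k$, $D$, and nearby generators as a short finite list of special cases, using that $k \ge 2$ gives enough room to maneuver.

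**Housekeeping.** Throughout I would use the constraint, analogous to Constraint~2 in the whisker case, coming from the bipartite-like structure of $\mathcal{A}_n$ relating exponents across an edge-decomposition, to argue that the candidate $M_3$ is actually a generator (lies in $I(\mathcal{A}_n)^k$) rather than merely a monomial of the right degree; and I would invoke Lemma~\ref{tree}-style uniqueness only where it applies (it does not apply here, since $\mathcal{A}_n$ is not a tree, so edge-decompositions genuinely are non-unique, and the repetition-handling of Construction~\ref{ordering} is not in play because here we have already passed to a minimal generating set). The proof should conclude by assembling the three cases and noting that in every case the witness $M_3$ was exhibited explicitly, so Lemma~\ref{HWfact} applies and $\mathcal{O}_n^{(k)}$ is a linear quotient ordering.
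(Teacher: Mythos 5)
You have the right case skeleton --- three cases by block membership, Lemma~\ref{HWfact} driving the verification, an explicit witness $M_3$ produced in each case --- and your sketch of the cross-block case ($M_1\in F$, $M_2\in S$) and of the $S$-block reduction to the Hoefel--Whieldon lex ordering on the antipath are essentially what the paper does in its Cases~1 and~2. You also correctly observe that edge decompositions are non-unique here but that the repetition apparatus of Construction~\ref{ordering} is not in play, since $\mathcal{O}_n^{(k)}$ is already an ordering of a minimal generating set by vertex decomposition.

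However, the proposal inverts where the difficulty actually lives. You single out the cross-block case as ``the main obstacle,'' and you dismiss the within-$F$ case (both $M_1,M_2\in F$) as reducing to ``the standard fact that a lexicographic ordering of the generators of a power of an ideal with linear quotients behaves well.'' No such fact applies: $I(\mathcal{A}_n)$ itself has no linear quotients, and the subset of minimal generators of $I(\mathcal{A}_n)^k$ divisible by $x_n$ is not the generating set of the power of any subideal with linear quotients. In the paper this is Case~3, by far the longest and most intricate part of the proof, split into four subcases indexed by the position of the first lex descent of $\beta$ from $\alpha$ (at $x_n$; at $x_2$; at some $x_j$, $3\le j\le n-2$; at $x_{n-1}$). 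In several of those subcases a single-variable swap along one edge is insufficient: the argument first uses re-decompositions of $M_2$ as a product of edges, together with degree counting, to rule out many configurations, and then builds the witness by trading two or even three edges simultaneously, e.g.\ $M_3 = M_2\cdot\frac{(x_2x_4)(x_3x_n)}{(x_1x_3)(x_2x_n)}$ or $M_3 = M_2\cdot\frac{(x_{n-2}x_n)(x_ax_{n-1})(x_bx_{n-1})}{(x_ax_{n-2})(x_bx_n)(x_1x_{n-1})}$. Your proposal has no mechanism to reach these and would stall precisely there.

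The proposed ``housekeeping'' ingredient is also unavailable: $\mathcal{A}_n$ is not bipartite for any $n\ge 5$ ($\mathcal{A}_5$ is the $5$-cycle, and for $n\ge 6$ the anticycle contains triangles such as $\{x_1,x_3,x_5\}$), so there is no Constraint-$2$-style exponent relation coming from a vertex bipartition as in the whisker case. The paper instead certifies that each candidate $M_3$ lies in $I(\mathcal{A}_n)^k$ by explicitly exhibiting it as a product of $k$ edges of $\mathcal{A}_n$ obtained from an edge decomposition of $M_2$ via the indicated trades; that ad hoc verification is doing the work you hoped an exponent identity would do.
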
 

\begin{proof}
Let $M_1$ and $M_2$ be part of a minimal generating set for $I(\mathcal{A}_n)^k$, with $M_1$ preceding $M_2$ in $\mathcal{O}_n^{(k)}$.  Assume the following vertex decompositions:
\begin{align*}
    M_1=x_1^{\alpha_1}\dots x_n^{\alpha_n}\quad\text{and}\quad M_2=x_1^{\beta_1}\dots x_n^{\beta_n},
\end{align*} 
and define vectors $\alpha=(\alpha_1,\dots,\alpha_n)$ and $\beta=(\beta_1,\dots,\beta_n)$.  By Lemma \ref{HWfact}, it suffices to show that there exists some $M_3$ preceding $M_2$ such that 
\begin{displaymath}
    \frac{M_3}{\text{gcd}(M_3, M_2)}\in\{x_1,x_2\dots , x_n\}\quad\text{and}\quad \frac{M_3}{\text{gcd}(M_3, M_2)}\Big|\frac{M_1}{\text{gcd}(M_1, M_2)}.
\end{displaymath}
We separate our proof into three main cases depending on whether $M_1$ and $M_2$ are in $F$ or $S$.  Note that since $M_1$ precedes $M_2$ in $\mathcal{O}_n^{(k)}$, the case where $M_1$ is in $S$ and $M_2$ is in $F$ is impossible.\vspace{0.15cm}

\noindent\textbf{Case 1:} In this case we assume that $M_1\in{F}$ and $M_2\in{S}$, and proceed with two subcases.\vspace{0.15cm}

\noindent\textbf{Subcase 1.1:}  First we assume that $M_2=(x_1x_{n-1})^k$. Since $M_1\in{F}$, there is some $i\in\{2,\dots, n-2\}$ such that $x_ix_n$ is an edge of $M_1$.  We choose a generator $M_3$ of $I(\mathcal{A}_n)^k$ as follows:
\begin{displaymath}
M_3:=
    \begin{cases}
    M_2\cdot\frac{x_ix_{n-1}}{x_1x_{n-1}}, & \text{if}\,\,\, i=2 \\
    M_2\cdot\frac{x_1x_i}{x_1x_{n-1}}, & \text{if}\,\,\, i>2 
\end{cases}
\end{displaymath}
In either case, $M_3$ precedes $M_2$ in $\mathcal{O}_n^{(k)}$.  Indeed, when $i>2$ this follows from the lex ordering on $S$ in Construction \ref{anticycle-order}(2), and otherwise $M_3$ is the distinguished generator and this follows from Construction \ref{anticycle-order}(3).  Furthermore we have:
\begin{align*}
\frac{M_3}{\text{gcd}(M_3, M_2)}=x_i\quad\text{and}\quad x_i\Big|\frac{M_1}{\text{gcd}(M_1, M_2)},
\end{align*}
where the equality follows directly from the definitions of $M_2$ and $M_3$ and the divisibility statement follows from the fact that $x_i$ divides $M_1$, but not $M_2$.  This completes this subcase.\vspace{0.15cm} 

\noindent\textbf{Subcase 1.2:} Now we assume that $M_2\neq(x_1x_{n-1})^k$. Since $M_1\in{F}$ and $M_2\in{S}$, it follows that $x_n$ divides $M_1$ but not $M_2$, and thus
\begin{displaymath}
    x_n\Big|\frac{M_1}{\text{gcd}(M_1, M_2)}.
\end{displaymath}
Now it suffices to find a generator $M_3$ preceding $M_2$ in $\mathcal{O}_n^{(k)}$ such that 
\begin{align}\label{1.2NTS}
    \frac{M_3}{\text{gcd}(M_3, M_2)}=x_n.
\end{align}
By assumption, $M_2$ must have an edge $x_ax_b$ such that $1\leq{a}<b\leq{n-1}$ and $x_ax_b\neq{x_1x_{n-1}}$.  In particular, if $a=1$, then $1<b<n-1$, and thus $x_nx_b$ is an edge of $\mathcal{A}_n$ in this case.  Therefore, we may choose a generator $M_3$ of $I(\mathcal{A}_n)^k$ as follows:
\begin{displaymath}
M_3:=
    \begin{cases}
    M_2\cdot\frac{x_nx_b}{x_ax_b}, & \text{if}\,\,\, a=1 \\
    M_2\cdot\frac{x_ax_n}{x_ax_b}, & \text{if}\,\,\, a>1 
\end{cases}
\end{displaymath}
In either case, it follows from Construction \ref{anticycle-order}(1) that $M_3$ precedes $M_2$ in $\mathcal{O}_n^{(k)}$ and satisfies \eqref{1.2NTS}.\vspace{0.15cm}

\noindent\textbf{Case 2:} In this case we assume that $M_1,M_2\in{S}$.
Note that it suffices to show $S$ is itself a linear quotient ordering. Since $S$ is an ordering of the minimal generators of $I(\mathcal{A}_n)^k$ not divisible by $x_n$, it follows that $S$ is an ordering of the minimal generators of $I(P_{n-1})^k$. Furthermore, by \cite[Proposition 3.1]{HW11}, the lex ordering $x_1>x_2>x_3>\dots>x_{n-1}$, which we denote by $L$, yields a linear quotient ordering on $I(P_{n-1})^k$.  Thus it remains only to show that our choice to move $(x_1x_{n-1})^k$ immediately after the distinguished generator as described in Construction \ref{anticycle-order}(3) does not disturb the linear quotient property.

Observe that if $M_2$ precedes $(x_1x_{n-1})^k$ in $L$ or if $M_2$ succeeds the distinguished generator $D:=(x_1x_{n-1})^{k-1}(x_2x_{n-1})$ in $L$, then $Q^L(M_2)=Q^S(M_2)$. 
Thus, since $L$ is a linear quotient ordering, we may assume without loss of generality that $M_2$ lies between $(x_1x_{n-1})^k$ and $D$ (inclusive) in $L$.  We proceed with three subcases.\vspace{0.15cm}

\noindent\textbf{Subcase 2.1:} First we assume that $M_2$ lies strictly between $(x_1x_{n-1})^k$ and $D$ in $L$.  Thus by the lex ordering $L$, we have that $\beta_1=k-1$, and so $x_1^{k-1}$ divides $M_2$.  Further since $M_2$ precedes $D$ in $L$, we have $\beta_2\geq 1$, implying that $x_2$ divides $M_2$.  Thus $x_1^{k-1}x_2$ divides $M_2$, while $x_1^{k}$ cannot divide $M_2$. It follows that
\begin{align*}
    \frac{M_2}{\text{gcd}(M_2, (x_1x_{n-1})^k)}\notin\{x_1,x_2\dots , x_n\}.
\end{align*}
Indeed, we have that $x_2$ divides $\frac{M_2}{\text{gcd}(M_2, (x_1x_{n-1})^k)}$, but equality would imply $M_2=D$, which is impossible in our case. This, as well as degree considerations, implies that 
\begin{displaymath}
    M:=\frac{(x_1x_{n-1})^k}{\text{gcd}((x_1x_{n-1})^k,M_2)}\notin\{x_1,x_2\dots , x_n\}.
\end{displaymath}

Since $Q^L(M_2)$ is generated by a subset of the variables $\{x_1,\dots,x_n\}$ and since $M\not\in\{x_1,\dots,x_n\}$, $M$ is not part of a minimal generating set for $Q^L(M_2)$.  It follows that $Q^S(M_2)=Q^L(M_2)$, and thus $Q^S(M_2)$ is generated by a subset of the variables, as desired. \vspace{0.15cm}

\noindent\textbf{Subcase 2.2:} Next we assume that $M_2=(x_1x_{n-1})^k$.  By the proof of Subcase 1.1, it follows that $(x_2, x_3,\dots , x_{n-2})$ is contained in the ideal quotient $Q^S((x_1x_{n-1})^k)$. Pairing this with the fact that $M_1$ is in $S$, and so $x_n$ cannot divide $M_1$, we may assume without loss of generality that
\begin{align*}
    \text{supp}\left(\frac{M_1}{\text{gcd}(M_1, M_2)}\right)\subseteq\{x_1,x_{n-1}\}.
\end{align*}
However, this is a clear contradiction since $M_2=(x_1x_{n-1})^k$.\vspace{0.15cm}

\noindent\textbf{Subcase 2.3:} Finally we assume that $M_2=D$ is the distinguished generator.  First note that $(x_3,\dots,x_{n-2})$ is contained in $Q^S(M_2)$.  Indeed, for $i\in\{3,\dots,n-2\}$ each generator
\begin{align*}
    N_i:=M_2\cdot\frac{x_1x_i}{x_1x_{n-1}},
\end{align*}
precedes $M_2$ in $S$.  Thus the containment follows from the following observation for $i\in\{3,\dots , n-2\}$: 
\begin{align*}
    \frac{N_i}{\mathrm{gcd}\,(N_i,M_2)}=x_i\in Q^S(M_2).
\end{align*}

So we may assume without loss of generality that 
\begin{align*}
    \text{supp}\left(\frac{M_1}{\text{gcd}(M_1, M_2)}\right)\subseteq\{x_1,x_2,x_{n-1}\},
\end{align*} 
where, as before, we can disregard $x_n$ since $M_1\in S$. 
Thus, since $x_1x_2$ is not an edge in $\mathcal{A}_n$, we have 
\begin{align*}
    M_1=(x_1x_{n-1})^a(x_2x_{n-1})^b=x_1^ax_2^bx_{n-1}^k,
\end{align*}
for some nonnegative integers $a$ and $b$
with $a+b=k$.  In particular we have $a\leq k$.  Furthermore since $M_1$ precedes $D=x_1^{k-1}x_2x_{n-1}^k$ in $S$, it follows that $a\geq k-1$.  Thus $a\in\{k-1,k\}$ yielding 
\begin{align*}
    M_1=x_1^{k-1}x_2x_{n-1}^k\quad\text{or}\quad M_1=x_1^{k}x_{n-1}^k.
\end{align*}
However, both of these cases are impossible, as the former implies $M_1=M_2=D$, which contradicts the assumption that $M_1$ precedes $M_2$, and the latter implies $(x_1x_{n-1})^k$ precedes the distinguished generator $D$ in $S$, which contradicts Construction \ref{anticycle-order}(3).\vspace{0.15cm}

\noindent\textbf{Case 3:} In this case we assume that $M_1,M_2\in{F}$.  The vertex decompositions of $M_1$ and $M_2$ imply
\begin{align*}
    \text{supp}\left(\frac{M_1}{\text{gcd}(M_1, M_2)}\right)=\{x_i\,|\,\alpha_i>\beta_i\}.
\end{align*}
We now proceed in four subcases depending on the possibilities for the vectors $\alpha$ and $\beta$.\vspace{0.15cm}

\noindent\textbf{Subcase 3.1:} First we assume that $\alpha_n>\beta_n$.  It follows that 
\begin{align*}
    x_n\Big|\frac{M_1}{\text{gcd}(M_1, M_2)},
\end{align*}
and so it suffices to find a generator $M_3$ preceding $M_2$ in $F$ such that 
\begin{align}\label{3.1NTS}
    \frac{M_3}{\text{gcd}(M_3, M_2)}=x_n.
\end{align}
If there is some edge $x_ax_b$ of $M_2$ with $1\leq{a}<b\leq{n-1}$ and $x_ax_b\neq{x_1x_{n-1}}$, then we may proceed precisely as in Subcase 1.2 to find the desired $M_3$, and we are done.

Thus we may assume without loss of generality that all edges of $M_2$ are either equal to $x_1x_{n-1}$ or incident to $x_n$. Since $\alpha_n>\beta_n$, it follows that at least one edge of $M_2$ is not incident to $x_n$, and thus $x_1x_{n-1}$ is an edge of $M_2$.  This implies that $\{x_2,\dots,x_{n-2}\}\subseteq Q^F(M_2)$; indeed, each generator 
    \begin{displaymath}
N_i:= 
    \begin{cases}
    M_2\cdot\frac{x_ix_{n-1}}{x_1x_{n-1}}, & \text{if}\,\,\, i=2 \\
    M_2\cdot\frac{x_1x_i}{x_1x_{n-1}}, & \text{if}\,\,\, 2<i<n-1 
\end{cases}
\end{displaymath}
for $i\in\{2,\dots,n-2\}$ precedes $M_2$ in $F$ by Construction \ref{anticycle-order}(1), and thus for all $i\in\{2,\dots , n-2\}$,
\begin{align*}
    \frac{N_i}{\mathrm{gcd}\,(N_i,M_2)}=x_i\in Q^F(M_2).
\end{align*}

So we may assume without loss of generality that $\beta_i\geq{\alpha_{i}}$ for all $i\in\{2,\dots,n-2\}$.  Notice however that since $M_2$ has only the edges $x_1x_{n-1}$ and edges incident to $x_n$, it follows that $\beta_1+\beta_{n-1}+2\beta_n=2k$, which yields the following inequalities
\begin{align}\label{ineq}
    \beta_{1}+\beta_{n-1}+\beta_{n}&=2k-\beta_n
    >2k-\alpha_n=\alpha_1+\dots+\alpha_{n-1}\geq \alpha_1+\alpha_{n-1}+\alpha_n,
\end{align}
where the strict inequality follows from our assumption in this subcase and the other inequality follows directly from the structure of the anticycle graph ($x_n$ is only connected by edges to vertices $x_2,\dots,x_{n-2}$).  The strict inequality $\beta_{1}+\beta_{n-1}+\beta_{n}>\alpha_1+\alpha_{n-1}+\alpha_n$ in \eqref{ineq} together with the assumption that $\beta_i\geq\alpha_i$ for all $i\in\{2,\dots,n-2\}$ yields the inequality
\begin{align*}
    2k=\sum_{i=1}^n\beta_i>\sum_{i=1}^n\alpha_i=2k,
\end{align*}
which is a clear contradiction.  This completes this subcase.\vspace{0.15cm}

\noindent\textbf{Subcase 3.2:} Next we assume that $\alpha_n=\beta_n$ and $\alpha_2>\beta_2$.  It follows that 
\begin{align*}
    x_2\Big|\frac{M_1}{\text{gcd}(M_1, M_2)},
\end{align*}
and so it suffices to find a generator $M_3$ preceding $M_2$ in $F$ such that 
\begin{align}\label{3.2NTS}
    \frac{M_3}{\text{gcd}(M_3, M_2)}=x_2.
\end{align}
If there is some edge $x_ax_b$ of $M_2$ such that $3\leq{a}<b\leq{n-1}$, then the generator
\begin{align*}
    M_3=M_2\cdot\frac{x_2x_b}{x_ax_b}
\end{align*}
precedes $M_2$ in $F$ and satisfies \eqref{3.2NTS}, as desired.  

Thus we may assume that all edges of $M_2$ are incident to $x_1$, $x_2$, or $x_n$. If $x_1x_b$ is an edge of $M_2$ for some $4\leq{b}\leq{n-1}$, we find that the generator
\begin{align*}
    M_3=M_2\cdot\frac{x_2x_b}{x_1x_b}
\end{align*}
precedes $M_2$ in $F$ and satisfies \eqref{3.2NTS}, as desired.  Similarly, if $x_ax_n$ is an edge of $M_2$ for some $3\leq{a}\leq{n-2}$, then the argument is completed by choosing
\begin{align*}
    M_3=M_2\cdot\frac{x_2x_n}{x_ax_n}.
\end{align*}

Thus we may assume that every edge of $M_2$ is either incident to $x_2$ or equal to $x_1x_3$.  Since $\alpha_2>\beta_2$, it follows that at least one edge of $M_2$ is not incident to $x_2$, and thus $x_1x_{3}$ is an edge of $M_2$.  Furthermore, since $M_2\in{F}$, it follows that $x_2x_n$ is also an edge of $M_2$.  Thus the generator
\begin{align*}
    M:=M_2\cdot\frac{(x_2x_4)(x_3x_n)}{(x_1x_3)(x_2x_n)}
\end{align*}
precedes $M_2$ in $F$, yielding
\begin{align*}
    \frac{M}{\mathrm{gcd}\,(M,M_2)}=x_4\in Q^F(M_2).
\end{align*}
Furthermore, we have that $\{x_5,\dots,x_{n}\}\subseteq Q^F(M_2)$. Indeed, each generator
\begin{align*}
    N_i=M_2\cdot\frac{x_3x_i}{x_1x_3}
\end{align*}
for $i\in\{5,\dots , n\}$ precedes $M_2$ in $F$ by Construction \ref{anticycle-order}(1),
and thus
\begin{align*}
    \frac{N_i}{\mathrm{gcd}\,(N_i,M_2)}=x_i\in Q^F(M_2).
\end{align*}

Therefore we may further assume that $\beta_i\geq{\alpha_i}$, for all $i\in\{4,\dots , n\}$.  However, by the same argument as in Subcase 3.1, the equality $\beta_1+\beta_3+2\beta_2=2k$ implies that $\beta_1+\beta_2+\beta_3>\alpha_1+\alpha_2+\alpha_3$, which coupled with the fact that $\beta_i\geq{\alpha_i}$ for all $i\in\{4,\dots , n\}$, produces the same contradiction as in Subcase 3.1 and completes this subcase. \vspace{0.15cm}

\noindent\textbf{Subcase 3.3:} Next we assume that $\alpha_n=\beta_n$, $\alpha_i=\beta_i$, for all $i\in\{2,\dots , j-1\}$, and $\alpha_j>\beta_j$, for some $j\in\{3,\dots , n-2\}$.  It follows that 
\begin{align*}
    x_j\Big|\frac{M_1}{\text{gcd}(M_1, M_2)},
\end{align*}
so it suffices to find a generator $M_3$ preceding $M_2$ in $F$ such that 
\begin{align}\label{3.3NTS}
    \frac{M_3}{\text{gcd}(M_3, M_2)}=x_j.
\end{align}
If there is an edge $x_ax_b$ of $M_2$ such that $1\leq{a}\leq{j-2}$ and $j+1\leq{b}\leq{n-1}$, then the generator
\begin{align*}
    M_3=M_2\cdot\frac{x_ax_j}{x_ax_b}
\end{align*}
precedes $M_2$ in $F$ and satisfies \eqref{3.3NTS}, as desired.  Similarly, if there is an edge $x_ax_b$ of $M_2$ such that $j+1\leq{a}<b\leq{n}$, then the argument is completed by choosing
\begin{align*}
    M_3=M_2\cdot\frac{x_jx_b}{x_ax_b}.
\end{align*}
And if $x_1x_c$ is an edge of $M_2$ for some $3\leq{c}\leq{j-2}$, then the argument is completed by choosing
\begin{align*}
      M_3=M_2\cdot\frac{x_cx_j}{x_1x_c}.
\end{align*}

So in summary, we may assume that all edges of $M_2$ are incident to $x_{j-1}$ or $x_{j}$, are of the form $x_cx_{n}$, for some $2\leq{c}\leq{j-2}$, or are of the form $x_ax_b$ with $2\leq{a}<b\leq{j-2}$. By our hypothesis for this subcase, there is an integer $r\in\{1\}\cup\{j+1,\dots , n-1\}$ such that $\alpha_r<\beta_r$, and in particular, $\beta_r>0$. Thus by the preceding description of edges of $M_2$, it follows that $x_rx_{z}$ is an edge of $M_2$ for some $z\in\{j-1, j\}$, and furthermore that every edge of $M_2$ incident to $x_r$ must also be incident to $x_{j-1}$ or $x_{j}$.  This allows us to rule out edges of the form $x_ax_b$ with $2\leq{a}<b\leq{j-2}$ in $M_2$.  Indeed, suppose $x_ax_b$ is such an edge. Since $2\leq{a}<j-2$ and $2<b\leq{j-2}$, we have that $x_ax_z,x_bx_r\in{I(\mathcal{A}_n)}$. Thus $M_2$ has the following edge decomposition: 
\begin{align*}
    M_2=M_2\cdot\frac{(x_ax_z)(x_bx_r)}{(x_ax_b)(x_rx_z)},
\end{align*}
which implies that $x_bx_r$ is an edge of $M_2$, but this contradicts the fact that all edges of $M_2$ incident to $x_r$ must also be incident to $x_{j-1}$ or $x_{j}$. 

Now we may assume that all edges of $M_2$ are incident to $x_{j-1}$ or $x_{j}$, or are of the form $x_cx_{n}$ for some $2\leq{c}\leq{j-2}$.  In fact, there must be an edge of the third type.  Otherwise, if all edges of $M_2$ are incident to $x_j$ or $x_{j-1}$, then our hypotheses in this subcase imply
\begin{align*}
    k=\beta_{j-1}+\beta_j=\alpha_{j-1}+\beta_j<\alpha_{j-1}+\alpha_j,
\end{align*}
and it follows that the degree of $M_1$ is at least $2(\alpha_{j-1}+\alpha_j)>2k$, which is a contradiction.  Thus there must be an edge of $M_2$ of the form $x_cx_n$ for some $2\leq c\leq j-2$.

Next recall that $x_rx_z$ is an edge of $M_2$ for some $r\in\{1\}\cup\{j+1,\dots , n-1\}$ and $z\in\{j-1, j\}$. In fact, we must have $r=1$ and $c=2$. Indeed, if $r\neq{1}$ or $c\neq{2}$, then $M_2$ has the edge decomposition 
\begin{align*}
    M_2=M_2\cdot\frac{(x_cx_r)(x_zx_n)}{(x_cx_n)(x_rx_z)},
\end{align*}
which implies that $x_cx_r$ is an edge of $M_2$, but this contradicts our description of the edges of $M_2$ given that $c,r\notin\{j-1, j,n\}$.  Thus $r=1$ and $c=2$.

Thus we may assume that all edges of $M_2$ are incident to $x_{j-1}$ or $x_{j}$, or are of the form $x_2x_n$, and that $x_2x_n$ and $x_1x_z$ with $z\in\{j-1,j\}$ are edges of $M_2$.  Now it follows that $j>3$.  Indeed, if $j=3$, then $j-1=2$,  and it follows that all edges of $M_2$ are incident to $x_{j-1}$ or $x_{j}$, which we have already shown is impossible.

So we may choose the generator 
\begin{align*}
    M_3=M_2\cdot\frac{(x_2x_j)(x_zx_n)}{(x_1x_z)(x_2x_n)},
\end{align*}
which precedes $M_2$ in $F$ and satisfies \eqref{3.3NTS}, as desired.\vspace{0.15cm}

\noindent\textbf{Subcase 3.4:}  Finally we assume that $\alpha_n=\beta_n$, $\alpha_i=\beta_i$ for all $i\in\{2,\dots, n-2\}$, and $\alpha_{n-1}>\beta_{n-1}$.  Note that this is indeed the final case because if $\alpha_n=\beta_n$ and $\alpha_i=\beta_i$ for all $i\in\{2,\dots , n-1\}$, then $M_1=M_2$, which contradicts  our assumption that $M_1$ precedes $M_2$ in $F$.  

In this case, notice that since $M_1$ and $M_2$ both have degree $2k$, our assumptions in this subcase imply that $\beta_1>\alpha_1$.
In particular we have $\beta_1>0$.  
Since $\alpha_{n-1}>\beta_{n-1}$, it also follows that 
\begin{align*}
    x_{n-1}\Big|\frac{M_1}{\text{gcd}(M_1, M_2)},
\end{align*}
and so it suffices to find a generator $M_3$ preceding $M_2$ in $F$ such that 
\begin{align}\label{3.4NTS}
    \frac{M_3}{\text{gcd}(M_3, M_2)}=x_{n-1}.
\end{align}

Suppose for the sake of contradiction that there is no such generator $M_3$.  Since $\beta_1>0$, we have that $x_1x_z$ is an edge of $M_2$ for some $z\in\{3,\dots,n-1\}$.  In fact we must have $z\in\{n-2,n-1\}$, because otherwise the generator
\begin{align*}
    M_3=M_2\cdot\frac{x_zx_{n-1}}{x_1x_z}
\end{align*}
precedes $M_2$ in $F$ and satisfies \eqref{3.4NTS}, yielding a contradiction.  In particular, every edge incident to $x_1$ must also be incident to $x_{n-2}$ or $x_{n-1}$.

Now we claim that there are no edges $x_ax_b$ of $M_2$ with  $2\leq{a}<b\leq{n-3}$.  This is indeed the case because otherwise, $M_2$ has the following edge decomposition
\begin{align*}
    M_2=M_2\cdot\frac{(x_1x_b)(x_ax_z)}{(x_ax_b)(x_1x_z)},
\end{align*} 
and thus $x_1x_b$ is an edge of $M_2$ with  $b\notin\{n-2,n-1\}$, which is impossible by the argument above.  Thus our claim holds, and since every edge incident to $x_1$ must also be incident to $x_{n-2}$ or $x_{n-1}$, it follows that every edge of $M_2$ is incident to $x_{n-2}, x_{n-1}$, or $x_n$.

Next we rule out $x_1x_{n-2}$ as an edge of $M_2$.  Supposing to the contrary that $x_1x_{n-2}$ is an edge of $M_2$, we claim that in this case $x_ax_n$ is not an edge of $M_2$ for $2\leq{a}\leq{n-3}$.  Indeed, if $x_ax_n$ is an edge of $M_2$ for some $3\leq{a}\leq{n-3}$, then $M_2$ has the following edge decomposition
\begin{align*}
    M_2=M_2\cdot\frac{(x_1x_a)(x_{n-2}x_n)}{(x_1x_{n-2})(x_ax_n)},
\end{align*}
and thus $x_1x_a$ is an edge of $M_2$ with $a\notin\{n-2,n-1\}$, which is impossible.  On the other hand, if $x_2x_n$ is an edge of $M_2$, then the generator 
\begin{align*}
    M_3=M_2\cdot\frac{(x_2x_{n-1})(x_{n-2}x_n)}{(x_1x_{n-2})(x_2x_n)}
\end{align*}
precedes $M_2$ in $F$ and  satisfies \eqref{3.4NTS}, yielding another contradiction.  This proves the claim, and it follows that any edge of $M_2$ incident to $x_n$ must also be incident to $x_{n-2}$.  Thus every edge of $M_2$ must be incident to $x_{n-2}$ or $x_{n-1}$.  However, since $\alpha_{n-2}=\beta_{n-2}$ and $\alpha_{n-1}>\beta_{n-1}$, this implies that 
\begin{align*}
    k=\beta_{n-1}+\beta_{n-2}<\alpha_{n-1}+\alpha_{n-2},
\end{align*}
which is impossible given the structure of $\mathcal{A}_n$.  Therefore, $x_1x_{n-2}$ is not an edge of $M_2$, as desired, and in particular $x_1x_{n-1}$ must be an edge of $M_2$.

Finally suppose that $M_2$ has edges $x_{a}x_{n-2}$ and $x_{b}x_{n}$, for some $a,b\in\{1,2,\dots, n-3\}$.  Then we have that the generator  
\begin{align*}
    M_3=M_2\cdot\frac{(x_{n-2}x_n)(x_{a}x_{n-1})(x_{b}x_{n-1})}{(x_{a}x_{n-2})(x_{b}x_{n})(x_1x_{n-1})}
\end{align*}
precedes $M_2$ in $F$ and satisfies \eqref{3.4NTS}, which yields a contradiction.  It follows that either all edges of $M_2$ incident to $x_{n-2}$ must also be incident to $x_{n}$, or all edges of $M_2$ incident to $x_{n}$ must also be incident to $x_{n-2}$.  Using the fact that every edge of $M_2$ must be incident to $x_{n-2}, x_{n-1}$, or $x_n$, the former case implies that all edges of $M_2$ are incident to $x_{n-1}$ or $x_n$, and the latter case implies that all edges of $M_2$ are incident to $x_{n-1}$ or $x_{n-2}$.  In either case, we have a contradiction to our hypotheses that $\alpha_{n-1}>\beta_{n-1}$, $\alpha_n=\beta_n$, and $\alpha_{n-2}=\beta_{n-2}$.  This completes the proof.
\end{proof}

\begin{remk}
    Another strategy for proving Theorem \ref{anticycle-theorem} is suggested in the subsequent preprint \cite{EFHHKM} (see \cite[Remark 6.8]{EFHHKM}).
\end{remk}

\section{Linear quotient computations}

In this section we examine the linear quotient property from a computational perspective using new methods we constructed on Macaulay2.  In particular, we introduce three methods: \texttt{isLinear}, \texttt{getQuotients}, and \texttt{findLinearOrderings}.  To describe these methods and provide examples, we adopt Notation \ref{quotientnotation} for ideal quotients to be used throughout this section.

Our first two methods take as an input an ordered list $\mathcal{O}=(m_1,\ldots,m_r)$ whose entries minimally generate an ideal $I$ of a standard graded polynomial ring.  The method \texttt{isLinear} tests whether $\mathcal{O}$ is a linear quotient ordering for $I$, and the method \texttt{getQuotients} outputs the ordered list of ideal quotients $\left(Q^\mathcal{O}(m_1), Q^\mathcal{O}(m_2),\ldots, Q^\mathcal{O}(m_r)\right)$ corresponding to the ordering $\mathcal{O}$.  The method \texttt{findLinearOrderings} takes an ideal $I$ as an input and returns an ordered list $\mathcal{O}$ of the generators of $I$ that is what we call \textit{most linear}; see Definition \ref{defmostlinear}.  The code for each of these methods and further documentation is in \cite{MSM24}.  

Our methods complement those in \cite{ficarra}; the method \texttt{isLinear} offers similar capabilities, although \texttt{getQuotients} and \texttt{findLinearOrderings} offer new capabilities beyond those in \cite{ficarra}.  In particular, \texttt{findLinearOrderings} provides useful homological information about edge ideals, even those that do not admit linear quotients; see Definition \ref{defmostlinear}, Example \ref{findorderex}, and Example \ref{findorderex2} for a precise description of its capabilities.   

To illustrate our results in Section 5, we focus on powers of the anticycle on six vertices $\mathcal{A}_6$, pictured below, first using our methods \verb|isLinear| and \verb|getQuotients|. We input the list of generators ordered according to the linear quotient ordering in Construction \ref{anticycle-order} (see also Theorem \ref{anticycle-theorem}) and verify that indeed the given ordering yields linear quotients.

\begin{figure}[hbt]
\begin{center}
\begin{tikzpicture}[scale = 1]
\node (a) at (2, 0) {$x_1$};
\node (b) at (1, {sqrt(3)}) {$x_2$};
\node (c) at (-1, {sqrt(3)}) {$x_3$};
\node (d) at (-2, 0) {$x_4$};
\node (e) at (-1, {-sqrt(3)}) {$x_5$};
\node (f) at (1, {-sqrt(3)}) {$x_6$};
\path [thick]
(a) edge node {} (c)
(a) edge node {} (d)
(a) edge node {} (e)
(b) edge node {} (d)
(b) edge node {} (e)
(b) edge node {} (f)
(c) edge node {} (e)
(c) edge node {} (f)
(d) edge node {} (f);
\end{tikzpicture}
\end{center}
\label{fig:anticycle}

\end{figure}

\begin{ex}\label{getquotientex}(\verb|getQuotients| and \verb|isLinear| with $I(\mathcal{A}_6)^2$)\\

We start by considering the ordering $\mathcal{O}_6^{(2)}$ defined in Construction \ref{anticycle-order}; this is a linear quotient ordering by Theorem \ref{anticycle-theorem}.  We first define our polynomial ring in six variables, and then create the list $\mathcal{O}_6^{(2)}$ by defining two sublists, $F$ and $S$, where $F$ consists of all generators divisible by $x_6$, and $S$ consists of all generators not divisible by $x_6$, as is described in Construction \ref{anticycle-order}.

\begin{verbatim}

i1 : Q = QQ[x_1..x_6];

i2 : F = {x_6^2*x_2^2, x_6^2*x_2*x_3, x_6^2*x_2*x_4, x_6^2*x_3^2, x_6^2*x_3*x_4,
          x_6^2*x_4^2, x_6*x_2^2*x_4, x_6*x_2^2*x_5, x_6*x_2*x_3*x_4,
          x_6*x_2*x_3*x_5, x_6*x_2*x_3*x_1, x_6*x_2*x_4^2, x_6*x_2*x_4*x_5,
          x_6*x_2*x_4*x_1, x_6*x_2*x_5*x_1, x_6*x_3^2*x_5, x_6*x_3^2*x_1,
          x_6*x_3*x_4*x_5, x_6*x_3*x_4*x_1, x_6*x_3*x_5*x_1, x_6*x_4^2*x_1,
          x_6*x_4*x_5*x_1};

i3 : S = {x_1^2*x_3^2, x_1^2*x_3*x_4, x_1^2*x_3*x_5, x_1^2*x_4^2, x_1^2*x_4*x_5,
          x_1*x_2*x_3*x_4, x_1*x_2*x_3*x_5, x_1*x_2*x_4^2, x_1*x_2*x_4*x_5,
          x_1*x_2*x_5^2, x_1^2*x_5^2, x_1*x_3^2*x_5, x_1*x_3*x_4*x_5,
          x_1*x_3*x_5^2, x_2^2*x_4^2, x_2^2*x_4*x_5, x_2^2*x_5^2,
          x_2*x_3*x_4*x_5, x_2*x_3*x_5^2, x_3^2*x_5^2};
          
\end{verbatim}
Now we concatenate these two lists to get $\mathcal{O}_6^{(2)}$, and pass it as input into \verb|getQuotients|:
\begin{verbatim}
i4 : getQuotients (F | S)

o4 = {{x }, {x , x }, {x }, {x , x }, {x , x }, {x }, {x , x }, {x , x },
        2     3   2     2     3   2     3   2     6     6   4     6   2  
     ---------------------------------------------------------------------------
      {x , x , x }, {x , x , x }, {x , x , x }, {x , x , x , x },
        6   4   2     6   5   4     6   3   2     6   4   3   2
     ---------------------------------------------------------------------------
      {x , x , x , x , x }, {x , x , x }, {x , x }, {x , x , x }, {x , x , x },
        6   5   4   3   2     4   3   2     6   2     6   5   2     6   3   2
     ---------------------------------------------------------------------------
      {x , x , x , x }, {x , x , x }, {x , x , x }, {x , x , x }, {x }, {x , x },
        6   5   3   2     4   3   2     6   3   2     4   3   2     6     6   3
     ---------------------------------------------------------------------------
      {x , x , x }, {x , x }, {x , x , x }, {x , x }, {x , x , x }, {x , x , x },
        6   4   3     6   3     6   4   3     6   1     6   4   1     6   3   1
     ---------------------------------------------------------------------------
      {x , x , x , x }, {x , x , x }, {x , x , x }, {x , x , x },
        6   4   3   1     6   4   3     4   3   2     6   2   1
     ---------------------------------------------------------------------------
      {x , x , x , x }, {x , x , x , x , x }, {x , x }, {x , x , x },
        6   3   2   1     6   4   3   2   1     6   1     6   4   1
     ---------------------------------------------------------------------------
      {x , x , x }, {x , x , x }, {x , x , x , x }, {x , x , x }}
        6   4   1     6   2   1     6   4   2   1     6   2   1

o4 : List

\end{verbatim}

Inspecting the output, we notice that each quotient is generated by vertices in $\mathcal{A}_6$, and thus $\mathcal{O}_6^{(2)}$ is a linear quotient ordering as we proved in Theorem \ref{anticycle-theorem}. To verify this result computationally, we use the method \verb|isLinear|:
\begin{verbatim}

i5 : isLinear (F | S, n = 6)

o5 = true

\end{verbatim}
\end{ex}

Next we introduce the notion of a \textit{most linear ordering} for an ideal $I$.

\begin{defn}\label{defmostlinear}
    Given an ordered list $\mathcal{O}=(m_1,\ldots,m_r)$ whose entries minimally generate an ideal $I$ of a standard graded polynomial ring, we say that $\mathcal{O}$ is \textit{linear up to $n$} if its ideal quotients $Q^\mathcal{O}(m_i)$ are linear for all $i\leq n$. If $\mathcal{O}$ is linear up to $n$, and no permutation of $\mathcal{O}$ is linear up to $j>n$, then we call $\mathcal{O}$ a \textit{most linear ordering} for the ideal $I$.
\end{defn}

To find a linear quotient ordering of an edge ideal $I(G)$, or a \textit{most linear} ordering in the case that $I(G)$ does not admit linear quotients, we use the method \verb|findLinearOrderings|.

\begin{ex}(\verb|findLinearOrderings| with $I(\mathcal{A}_6)$)\label{findorderex}
\begin{verbatim}
i6 : antiCycleSix = ideal (x_1*x_3, x_1*x_4, x_1*x_5, x_2*x_4, x_2*x_5,
                           x_2*x_6, x_3*x_5, x_3*x_6, x_4*x_6);

i7 : findLinearOrderings (antiCycleSix, 6)

No linear ordering found; returning most linear ordering as a list.
    -- Elapsed time: .16227 seconds.

o7 = {x x , x x , x x , x x , x x , x x }
        4 6   3 6   2 6   3 5   2 5   2 4

o7 : List

\end{verbatim}

As expected from \cite{Fro90} (see also \cite{HW11}), we observe that $I(\mathcal{A}_6)$ does not admit linear quotients.  Furthermore, we find a most linear ordering for $\mathcal{A}_6$ and consequently, a subgraph $G$ of $\mathcal{A}_6$ of largest size whose edge ideal $I(G)=(x_4x_6,\, x_3x_6,\, x_2x_6,\, x_3x_5,\, x_2x_5,\,x_2x_4)$ admits linear quotients.  This gives some indication of how close $I(\mathcal{A}_6)$ is to admitting linear quotients.

In the next example we consider the second power of $I(\mathcal{A}_6)$, which admits linear quotients as proved in Theorem \ref{anticycle-theorem}.  We use \texttt{findLinearOrderings} to exhibit a linear quotient ordering.
\end{ex}

\begin{ex}(\verb|findLinearOrderings| with $I(\mathcal{A}_6)^2$)\label{findorderex2}
\begin{verbatim}
i8 : findLinearOrderings (trim antiCycleSix^2, 6)

Linear ordering found, returning as a list.
    -- Elapsed time: 1.54978 seconds.

       2 2       2       2   2 2       2   2 2                       2    
o8 = {x x , x x x , x x x , x x , x x x , x x , x x x x , x x x x , x x x ,
       4 6   3 4 6   2 4 6   3 6   2 3 6   2 6   3 4 5 6   2 4 5 6   3 5 6
     ---------------------------------------------------------------------------
                           2                   2
      x x x x , x x x x , x x x , x x x x , x x x , x x x x , x x x x ,
       2 3 5 6   1 3 5 6   2 5 6   1 2 5 6   2 4 6   2 3 4 6   1 3 4 6
     ---------------------------------------------------------------------------
         2               2                   2               2 2       2
      x x x , x x x x , x x x , x x x x , x x x , x x x x , x x , x x x ,
       1 4 6   1 4 5 6   2 4 6   1 2 4 6   1 3 6   1 2 3 6   3 5   2 3 5
     ---------------------------------------------------------------------------
          2    2 2       2                       2                 2       2 2
      x x x , x x , x x x , x x x x , x x x x , x x x , x x x x , x x x , x x ,
       1 3 5   2 5   1 2 5   2 3 4 5   1 3 4 5   2 4 5   1 2 4 5   1 4 5   1 5
     ---------------------------------------------------------------------------
         2               2       2 2       2   2 2             2       2 2
      x x x , x x x x , x x x , x x , x x x , x x , x x x x , x x x , x x }
       1 3 5   1 2 3 5   1 3 5   2 4   1 2 4   1 4   1 2 3 4   1 3 4   1 3

o8 : List

\end{verbatim}
\end{ex}
As desired we find a linear quotient ordering of $I(A_6)^2$. Moreover, this ordering is different from $O_6^{(2)}$ (see Example \ref{getquotientex}), demonstrating that linear quotient orderings need not be unique.

\section*{Acknowledgments}
The authors thank Eran Nevo for helpful discussions and his suggestion to work on Theorem \ref{anticycle-theorem}.  They also thank Takayuki Hibi and Alessio D'Ali for shedding light on the history and origin of Corollary \ref{recover}.
Finally they thank Allen Mi, Max Hammond, and Hamilton Wan for fruitful discussions and Sinisa Pajevic for editorial assistance at the initial stages of this project. 




\bibliographystyle{siam}
\bibliography{linearquotients}
 \end{document}